\newcommand*{\tightdisplaymath}{\abovedisplayskip\z@\belowdisplayskip\z@}
\author{Xiaofa Chen} 
\author{Mattia Ornaghi}
\subjclass[2020]{14F08, 18E35, 18G70}
\thanks{X.C. is supported by “the Fundamental Research Funds for the Central Universities” (grant number: WK0010000093).\ 
M.O. was supported by the research project FARE 2018 HighCaSt (grant number: R18YA3ESPJ) and by ERC Advanced
Grant - CUP: G43C23001750006 - HE-ERC23ANEEM-01}
\newcommand{\vers}{A remark on fibrancy of ($\mbox{A$_{\infty}$Cat}$,$W^{\tiny\mbox{A}_{\infty}}_{\tiny\mbox{qe}}$)}
\title[\vers]{A remark on fibrancy of ($\mbox{A$_{\infty}$Cat}$,$W^{\tiny\mbox{A}_{\infty}}_{\tiny\mbox{qe}}$)}
\address{\parbox{0.9\textwidth}{University of Science and Technology of China\\
School of Mathematics Sciences\\
Hefei, 230026, Anhui Province, P.R.China}}
\email{xchen@imj-prg.fr}
\address{\parbox{0.9\textwidth}{Universit\`a degli Studi di Milano\\
Dipartimento di Matematica\\
Via Cesare Saldini 50, 20133 Milano, Italy}}
\email{mattia12.ornaghi@gmail.com}
\theoremstyle{definition}
\newtheorem{defn}{Definition}[section]
\newtheorem{thm}{Theorem}[section]
\newtheorem{pro}{Properties}[section]
\newtheorem{lem}[thm]{Lemma}
\newtheorem{namedthmA}{Theorem}
\newtheorem{namedcorA}{Corollary}
\theoremstyle{remark}
\newtheorem{rem}{Remark}[section]
\newtheorem{exmp}{Example}[section]
\newcommand{\Ain}{\mbox{A$_{\infty}$}}
\newcommand{\aCat}{\mbox{A$_{\infty}$Cat}}
\newcommand{\Id}{\mbox{Id}}
\newcommand{\A}{\mathscr{A}}
\newcommand{\B}{\mathscr{B}}
\newcommand{\Fun}{\mbox{Fun}}
\newcommand{\G}{\mathscr{G}}
\newcommand{\F}{\mathscr{F}}
\newcommand{\C}{\mathscr{C}}
\begin{document}

\date{\today}

\maketitle

\begin{abstract}
In \cite[\S6]{Pas} J. Pascaleff asked if ($\mbox{A$_{\infty}$Cat}$,$W^{\tiny\mbox{A}_{\infty}}_{\tiny\mbox{qe}}$) is a fibrant object in RelCat.\
In this note we prove the existence, in the category of (strictly unital) $\Ain$categories, of the pullback of a (strictly unital) $\Ain$functor, satisfying a particular property (denoted by F1), along any $\Ain$functor.\ As a consequence we provide a positive answer to Pascaleff question.
\end{abstract}

\section*{Introduction}

It is well known that the category of DG Categories, linear over a commutative ring, has a model structure \cite{Tab}.\ 
The weak equivalences are the quasi-equivalences and the fibrations are the DG functors $F:C\to D$ such that: 
\begin{itemize}
\item[(f1)] For every $x,y\in C$, the map of chain complexes
\begin{align*}
F: C(x,y)\to D(Fx,Fy)
\end{align*} 
is a degreewise surjection. 
\item[(F2)] The functor of linear categories $H^0(F)$ is an isofibration.\
\end{itemize}
This model structure provides a very solid description of the homotopy theory of DGCat.\\ 
On the other hand, the category $\Ain$Cat, of $\Ain$categories (i.e. the DG categories associative up to cohomology) with the $\Ain$functors, has no model structure.\
To be precise, the category of $\Ain$categories is not even complete, since it has no equalizers (see \cite{COS}).\\
However the category of $\Ain$categories can be equipped with a structure of relative category, whose weak equivalences are the quasi-equivalences.\ 
Briefly, an $\Ain$functor $\F:\A\to\B$, is a \emph{quasi-equivalence} if it induces an equivalence of graded categories $[\F]:H(\A)\to H(\B)$.\ 
We denote by $W^{\tiny\mbox{A}_{\infty}}_{\tiny\mbox{qe}}$ the set of quasi-equivalences.\\ 
Recently, using the structure of relative category, James Pascaleff proved that the homotopy theories of differential graded categories and of $\Ain$categories, over a field, are equivalent at the $(\infty,1)$-categorical level (see \cite{Pas}).\ 
In his paper it is remained open the question if the relative category ($\mbox{A$_{\infty}$Cat}$,$W^{\tiny\mbox{A}_{\infty}}_{\tiny\mbox{qe}}$) is fibrant.\ If this was true, than he pointed out that his proof could be simplified.\ In this note we prove: 

\begin{namedthmA}\label{Pasca}
The relative category ($\mbox{A$_{\infty}$Cat}$,$W^{\tiny\mbox{A}_{\infty}}_{\tiny\mbox{qe}}$) of $\Ain$categories is a fibrant object in RelCat.
\end{namedthmA}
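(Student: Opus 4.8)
The plan is to equip the category of (strictly unital) $\Ain$categories with the structure of a \emph{fibration category} in the sense of K.~Brown --- a category of fibrant objects --- taking as weak equivalences the quasi-equivalences, and then to invoke the fact that the underlying relative category of a fibration category is a fibrant object of $\mathrm{RelCat}$ (this is, after all, one of the standard ways fibration categories are seen to present $(\infty,1)$-categories, and the homotopical calculus of right fractions coming from the fibration structure is what makes the Barwick--Kan lifting conditions work out). So the substantive work is to verify Brown's axioms for $(\aCat, W^{\tiny\mbox{A}_{\infty}}_{\tiny\mbox{qe}})$: finite products exist objectwise, two-out-of-three for quasi-equivalences is clear since equivalences of graded categories have it, and among the remaining axioms only the existence and stability of pullbacks along fibrations is genuinely delicate, because $\aCat$ is not finitely complete (it has no equalizers).

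First I would fix the fibrations: an $\Ain$functor $\F\colon\A\to\B$ is a \emph{fibration} if it satisfies the evident analogue of the DGCat conditions (f1) and (F2), namely $\F\colon\A(x,y)\to\B(\F x,\F y)$ is degreewise surjective for all objects $x,y$ and the induced functor $H^{0}(\F)$ is an isofibration --- this combined requirement is property F1 of the abstract. It is immediate that isomorphisms are fibrations, that fibrations are closed under composition, and that the trivial fibrations are exactly the surjective-on-homs quasi-equivalences. For a path object of $\A$ I would take $\A^{I}:=\Fun(I,\A)$, the $\Ain$category of $\Ain$functors out of the ``interval'' DG category $I$ (two objects, a chosen isomorphism and its inverse, no higher structure), together with the diagonal $\A\to\A^{I}$, which is a quasi-equivalence, and the endpoint evaluation $\A^{I}\to\A\times\A$, which is a fibration; the associated mapping path object $\A\times_{\B}\B^{I}$ of a functor $\F\colon\A\to\B$ then yields, by the usual Brown-lemma construction, a factorisation of $\F$ as a quasi-equivalence followed by a fibration. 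All of this runs in parallel to the DGCat picture and is routine.

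The heart of the matter --- and the step I expect to be the main obstacle --- is the following: given a fibration $\F\colon\A\to\B$ with property F1 and an \emph{arbitrary} strictly unital $\Ain$functor $\G\colon\C\to\B$, construct the pullback $\A\times_{\B}\C$ in $\aCat$ and show that the projection onto $\C$ is again a fibration. The obstruction is precisely the non-strictness of $\Ain$functors: the naive object-by-object and hom-by-hom fibre product of chain complexes carries no $\Ain$structure compatible with the higher Taylor components $\F_{n},\G_{n}$ for $n\ge 2$, and there is no ambient limit to fall back on. The idea is to use the degreewise surjectivity of $\F$ to choose a linear (neither $\Ain$ nor functorial) section of $\F$ on each morphism complex, and to build the pullback by hand: objects are pairs $(a,c)$ with $\F a=\G c$; the morphism complex from $(a_{0},c_{0})$ to $(a_{1},c_{1})$ is a fibre product of $\A(a_{0},a_{1})$ and $\C(c_{0},c_{1})$ formed with the aid of these sections and of the components of $\F$ and $\G$; and the higher operations $m_{n}$ are assembled from those of $\A$ and $\C$ together with $\F_{n},\G_{n}$. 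One must then verify the $\Ain$relations, check that the two projections are \emph{strict} $\Ain$functors with the one to $\C$ inheriting F1, and --- since limits here are subtle --- verify the universal property against all, not necessarily strict, $\Ain$functors $\mathscr{D}\to\A$ and $\mathscr{D}\to\C$ that agree after composing to $\B$.

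With these pullbacks in hand the remaining Brown axioms fall into place: both fibrations and trivial fibrations are stable under the pullbacks just constructed --- the former by construction, the latter by a short argument on cohomology, or by arranging the chosen sections to reflect quasi-isomorphisms. Together with the path object and the factorisation described above this furnishes the fibration-category structure on $(\aCat, W^{\tiny\mbox{A}_{\infty}}_{\tiny\mbox{qe}})$, whence fibrancy in $\mathrm{RelCat}$ by the criterion recalled at the outset, which is the assertion of the Theorem.
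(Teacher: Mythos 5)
Your proposal is correct and follows essentially the same route as the paper: reduce fibrancy in RelCat to Meier's criterion (existence of pullbacks of fibrations along arbitrary $\Ain$functors, with fibrations and acyclic fibrations stable under them), and construct the pullback by hand as the quiver of pairs $(a,c)$ with hom-complexes $\mbox{Ker}(\F^1)\oplus\A''(-,-)$, using a chosen graded section of $\F^1$ to assemble the higher operations inductively and then checking the universal property against non-strict $\Ain$functors. The only cosmetic differences are that the paper's (F1) demands a \emph{graded-split} surjection (equivalent to your degreewise surjectivity over a field, but essential over a general ring) and keeps the isofibration condition (F2) separate, that only the projection to $\A''$ is strict (the one to $\A$ acquires higher components from $\G$), and that the remaining Brown axioms you verify (path objects, factorizations) are delegated to Pascaleff rather than reproved.
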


First we need to choose a subclass $F^{\tiny\mbox{A}_{\infty}}$ of $\Ain$functors playing the role of \emph{fibrations}.\ There is a very natural choice of fibrations considering the work of Lef\`evre-Hasegawa on $\Ain$algebras (see \cite{LH}): 
a \emph{fibration} is an $\Ain$functor satisfying the conditions (F1)\footnote{Clearly every epimorphism is splitting if $R$ is a field.} and (F2) below.\\ 

We say that an A$_{\infty}$functor $\mathscr{F}:\A\to \B$ has the properties:
\begin{itemize}
\item[(F1)]  If, for any pair of objects $x,y\in\A$, 
\begin{align*}
\mathscr{F}^1:\A(x,y)\to \B(\F_0(x),\F_0(y))
\end{align*}
is a graded-split surjection.
\item[(F2)] If  
\begin{align*}
[\mathscr{F}]:H^0(\A)\to H^0(\B)
\end{align*}
is an isofibration.
\end{itemize}
From now on, we denote by $\F:\A \twoheadrightarrow \B$ an $\Ain$functor $\F$ satisfying (F1).\ 
We will use this notation throughout.\\
Before continuing, let us clarify that in Theorem \ref{Pasca}, with "$\Ain$Cat", we mean the category of strictly unital flat $\Ain$categories with strictly unital $\Ain$functors (cf.~Remark \ref{DefClassica} and Definition \ref{Afununit}).\ Clearly conditions (F1) and (F2) makes sense for non unital $\Ain$functors between non unital non flat $\Ain$categories, linear over a commutative ring.\\

As Pascaleff pointed out, using \cite[Definition 3.1]{Mei}, Theorem \ref{Pasca} holds if the pullback of a fibration of A$_{\infty}$categories along any A$_{\infty}$functor exists and is a fibration, and also that the pullback of an acyclic fibration (i.e. a fibration which is also a quasi-equivalence) is an acyclic fibration.\ So it follows directly from the followings three (more general) results: Theorem \ref{teoremone}, Theorem \ref{teoremino} and Corollary \ref{corollario}, which we will prove in this paper:

\begin{namedthmA}\label{teoremone}
Let $\A$, $\A'$, $\A''$ be three A$_{\infty}$categories, $\F:\A\twoheadrightarrow\A'$ an A$_{\infty}$functor satisfying (F1), and $\G:\A''\to\A'$ an $\Ain$functor.\ 
The diagram:
\begin{equation}\label{princip}
\xymatrix{
\A\ar@{->>}[r]^{\F}&\A'&\ar[l]_{\G}\A''
}
\end{equation}
has a pullback in the category of (strictly unital) $\Ain$categories.\\ 
Denoting these data by $(\mathscr{P},\alpha,\beta)$, fitting the following diagram
\begin{equation}\label{eq:1}
\xymatrix{
\mathscr{P}\ar@{-->}[r]^{\alpha}\ar@{-->}[d]^{\beta}&\A''\ar[d]^-{\G}\\
\A\ar@{->>}[r]^-{\F}&\A'
}
\end{equation}
we have that $\alpha$ satisfies (F1).  
\end{namedthmA}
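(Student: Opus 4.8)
The plan is to construct the pullback $\mathscr{P}$ explicitly, at the level of objects, hom-complexes, and higher $\Ain$-operations, and then verify the universal property together with property (F1) for $\alpha$. I would first take the object set of $\mathscr{P}$ to be the honest pullback of sets $\mathrm{Ob}(\A)\times_{\mathrm{Ob}(\A')}\mathrm{Ob}(\A'')$, with $\alpha$ and $\beta$ the two projections on objects. The subtle point is the hom-complexes: one cannot simply take the pullback of chain complexes $\A(\beta p,\beta q)\times_{\A'(\F\beta p,\F\beta q)}\A''(\alpha p,\alpha q)$, because $\G$ carries higher components $\G^n$ ($n\geq 2$) that do not respect such a naive fibre product, and more importantly because the category of $\Ain$-categories lacks equalizers, so one must be careful that the object being built is genuinely a categorical pullback and not merely a bilimit. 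The correct move, exploiting that $\F$ satisfies (F1), is to use the graded splitting of $\F^1$: for each pair $x,y$ choose a graded (not necessarily chain-map) section $s_{x,y}$ of $\F^1_{x,y}$ and a graded complement, writing $\A(x,y)\cong \B(\F x,\F y)\oplus K(x,y)$ where $K(x,y)=\ker(\F^1_{x,y})$ as graded $R$-modules. Then set $\mathscr{P}(p,q):=\A''(\alpha p,\alpha q)\oplus K(\beta p,\beta q)$ as a graded module, with $\alpha^1$ the projection onto the first summand (which is manifestly a graded-split surjection, giving (F1) for $\alpha$ for free once the $\Ain$-structure is in place), and $\beta^1$ recovered from $(\G^1$ composed with inclusion$)$ and the section $s$.

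The heart of the argument is then transporting an $\Ain$-structure onto $\mathscr{P}$. I would define the operations $\mathfrak{m}^n_{\mathscr{P}}$ and the components $\alpha^n$, $\beta^n$ by induction on $n$, requiring that $\alpha$ be the strict projection $\Ain$-functor (so $\alpha^n=0$ for $n\geq 2$ on the $K$-part, essentially) and that $\beta$ be forced by the constraint $\G\circ\alpha=\F\circ\beta$ together with the chosen splitting — this is exactly the "homotopy transfer along a section" type of construction, and the fact that $s$ is only a graded section (not a chain map) is what generates the nontrivial higher terms $\mathfrak{m}^n_{\mathscr{P}}$ and $\beta^n$. One checks the $\Ain$-relations for $\mathscr{P}$ and the $\Ain$-functor relations for $\alpha,\beta$ by the standard obstruction-theoretic induction, the obstruction at stage $n$ lying in a space controlled by the lower stages and vanishing because the squares commute by construction; strict unitality is preserved because $\F,\G$ are strictly unital and the splitting $s$ can be chosen compatibly with the strict units (send the unit of $\B(\F x,\F x)$ to the unit of $\A(x,x)$, and take $K(x,x)$ to lie in the augmentation ideal).

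Finally I would verify the universal property: given $\Ain$-functors $\mathscr{U}\to\A$ and $\mathscr{U}\to\A''$ agreeing after composing to $\A'$, the induced map to $\mathscr{P}$ on objects is forced by the set-pullback, and on morphisms the $\A''$-component determines the first summand while the $K$-component is pinned down by the difference between the given map to $\A$ and the section $s$ applied to its image in $\B$ — uniqueness is then immediate, and existence again a straightforward induction matching the one above. The main obstacle I anticipate is not the universal property but checking that the inductively defined $\mathfrak{m}^n_{\mathscr{P}}$ genuinely satisfy the $\Ain$-relations while simultaneously keeping $\alpha$ strict and $\beta$ consistent: the bookkeeping of which terms land in the $\A''$-summand versus the $K$-summand, and confirming the Maurer–Cartan-type obstruction vanishes, is where the real work lies. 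A secondary point requiring care is that everything must be done with strict units and flatness throughout, so at each inductive step one should check the constructed operations vanish appropriately on units; I expect this to follow formally from the strict unitality of $\F$ and $\G$ and the unit-compatible choice of section.
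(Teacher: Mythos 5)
Your proposal is correct and follows essentially the same route as the paper: use the graded splitting from (F1) to write $\A(x,y)\cong \mathrm{Ker}(\F^1)\oplus\A'(\F^0 x,\F^0 y)$, take the pullback's underlying graded quiver to be $\mathrm{Ker}(\F^1)\oplus\A''$ over the fibre product of object sets, transfer the $\Ain$-structure inductively so that the square commutes (with the obstruction at stage $n$ vanishing by the lower stages), let $\alpha$ be the strict projection onto the $\A''$-summand (whence (F1) is automatic), and check the universal property componentwise. The only cosmetic difference is that the paper isolates the "strictification" of $\F$ as an explicit automorphism $\Phi=\mathrm{Id}+\gamma$ of $B_\infty(\A)$ before forming the fibre product, whereas you fold that step into the inductive transfer.
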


\begin{namedcorA}\label{corollario}
Let $\A$, $\A'$, $\A''$, $\F:\A\twoheadrightarrow\A'$ and $\G:\A''\to\A'$ be as in Theorem \ref{teoremone}, if they are strictly unital then (\ref{princip}) has a pullback in the category of strictly unital $\Ain$categories.
\end{namedcorA}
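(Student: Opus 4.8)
The plan is to revisit the explicit construction of the pullback $(\mathscr{P},\alpha,\beta)$ produced in the proof of Theorem \ref{teoremone} and to check that, once the data $\A,\A',\A''$ and $\F,\G$ are strictly unital, that construction never leaves the category of strictly unital $\Ain$categories with strictly unital $\Ain$functors (in the sense of Definition \ref{Afununit}). No new idea is required; one only tracks the strict units through the formulas. Write $1_a\in\A(a,a)$, $1_{a'}\in\A'(a',a')$, $1_{a''}\in\A''(a'',a'')$ for the strict units and recall that strict unitality of $\F$ and $\G$ amounts to $\F^1(1_a)=1_{\F_0 a}$, $\G^1(1_{a''})=1_{\G_0 a''}$, together with the vanishing of $\F^k$ and $\G^k$ for $k\ge 2$ on every tuple one of whose entries is an identity.

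\emph{Step 1: $\mathscr{P}$ is strictly unital and $\alpha,\beta$ are strictly unital.} An object of $\mathscr{P}$ is a pair $(a,a'')$ with $\F_0 a=\G_0 a''$. According to the description of the morphism spaces of $\mathscr{P}$ in the proof of Theorem \ref{teoremone}, the units $1_a$ and $1_{a''}$ assemble into a well-defined element $1_{(a,a'')}\in\mathscr{P}((a,a''),(a,a''))$, the compatibility needed being exactly $\F^1(1_a)=1_{\F_0 a}=1_{\G_0 a''}=\G^1(1_{a''})$. I would then verify that $1_{(a,a'')}$ is a strict unit for the operations $m_k^{\mathscr{P}}$. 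Each $m_k^{\mathscr{P}}$ is assembled from $m_k^{\A}$, $m_k^{\A''}$ and correction terms built out of the chosen section of $\F^1$ and the higher components of $\F$ and $\G$; feeding in an identity argument kills every correction term, and the remaining identities reduce to $m_2^{\A}(1_a,-)=\id=m_2^{\A}(-,1_a)$ and $m_k^{\A}(\dots,1_a,\dots)=0$ for $k\ne2$, already available in $\A$, and their counterparts in $\A''$. Hence $\mathscr{P}$ is a strictly unital $\Ain$category; and $\alpha\colon\mathscr{P}\to\A''$, $\beta\colon\mathscr{P}\to\A$ are strictly unital, since on objects they send $(a,a'')$ to $a''$ and to $a$, on morphisms they send $1_{(a,a'')}$ to $1_{a''}$ and to $1_a$, and their higher components vanish on identity inputs.

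\emph{Step 2: the universal property inside the strictly unital category.} Let $\Q$ be a strictly unital $\Ain$category together with strictly unital $\Ain$functors $u\colon\Q\to\A''$ and $v\colon\Q\to\A$ with $\G u=\F v$. By Theorem \ref{teoremone} there is a unique $\Ain$functor $h\colon\Q\to\mathscr{P}$ with $\alpha h=u$ and $\beta h=v$, so it suffices to show that $h$ is strictly unital. For an object $q$, the element $h^1(1_q)$ is sent by $\alpha$ to $u^1(1_q)=1_{u_0 q}$ and by $\beta$ to $v^1(1_q)=1_{v_0 q}$, hence (again by the description of the morphisms of $\mathscr{P}$) it equals $1_{h_0 q}$; and for $k\ge2$ the element $h^k(x_1,\dots,1_q,\dots,x_k)$ is sent to zero by both $\alpha$ and $\beta$, since $u$ and $v$ are strictly unital and the correction terms in the formula for $h^k$ vanish on an identity entry, so it is zero. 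Thus $h$ is strictly unital, and as its uniqueness is already part of Theorem \ref{teoremone}, the triple $(\mathscr{P},\alpha,\beta)$ is a pullback of (\ref{princip}) in the category of strictly unital $\Ain$categories.

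The work here is bookkeeping, not anything conceptual: every ingredient of the construction of Theorem \ref{teoremone} — the structure operations of $\A$ and $\A''$, the differentials $m_1$, the chosen section of $\F^1$ — respects strict units, and strict unitality of $\F$ and $\G$ is precisely what annihilates the higher correction terms on identity arguments. So the only real task is to run Steps 1 and 2 against the exact formulas (and signs) of Theorem \ref{teoremone}, in particular confirming that the element of $\mathscr{P}((a,a''),(a,a''))$ attached to the pair $(1_a,1_{a''})$ is indeed the morphism that the $\Ain$structure of $\mathscr{P}$ treats as the identity; I expect this matching of conventions, rather than any substantive mathematics, to be the main point to get right.
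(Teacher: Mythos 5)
Your proposal is correct and follows essentially the same route as the paper: one checks that the pair of strict units gives a strict unit of the pullback quiver $(K\times\A'')_{S'}$ (the paper does this by feeding an identity entry into the recursion (\ref{condizione12})) and that the two projections, as well as $\mbox{Id}_K\times\G$, are strictly unital. You are in fact slightly more thorough than the paper, which does not explicitly verify your Step 2 (that the functor induced by a strictly unital cone is itself strictly unital); the one convention to watch is that the morphism spaces of $\mathscr{P}$ are $\mbox{Ker}(\F^1)\oplus\A''$ rather than a literal fibre product, so the unit of $(a,a'')$ is the image of the pair $(1_a,1_{a''})$ under the chosen splitting, i.e.\ $(r^1(1_a),1_{a''})$.
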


\begin{namedthmA}\label{teoremino}
Let $\A$, $\A'$, $\A''$, $\F:\A\twoheadrightarrow\A'$ and $\G:\A''\to\A'$ be as in Theorem \ref{teoremone}.\
\begin{itemize}
\item[i.] 
If $\F$ satisfies (F2) then $\alpha$ satisfies (F2).\
\item[ii.] If $\F$ satisfies (F2) and it is a quasi-equivalence then $\alpha$ is a quasi-equivalence.
\end{itemize}
\end{namedthmA}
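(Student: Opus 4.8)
The strategy is to analyse the hom–complexes of $\mathscr{P}$. By the construction in Theorem~\ref{teoremone}, for objects $P,Q$ of $\mathscr{P}$ the complex $\mathscr{P}(P,Q)$ may be taken to be the fibre product of complexes $\A(\beta P,\beta Q)\times_{\A'(\F\beta P,\F\beta Q)}\A''(\alpha P,\alpha Q)$, with $\beta^{1}$ and $\alpha^{1}$ the two projections and differential the restriction of $(m_{1}^{\A},m_{1}^{\A''})$. Since $\F^{1}$ is onto by (F1), there is a short exact sequence of complexes
\begin{equation}\label{eq:MV}
0\longrightarrow\mathscr{P}(P,Q)\xrightarrow{\,(\beta^{1},\alpha^{1})\,}\A(\beta P,\beta Q)\oplus\A''(\alpha P,\alpha Q)\xrightarrow{\,\F^{1}-\G^{1}\,}\A'(\F\beta P,\F\beta Q)\longrightarrow 0 ,
\end{equation}
so in particular $\ker\alpha^{1}\cong\ker\bigl(\F^{1}\colon\A(\beta P,\beta Q)\to\A'(\F\beta P,\F\beta Q)\bigr)$ as complexes, and the long exact cohomology sequence of \eqref{eq:MV} shows that the functor $\Phi:=([\beta^{1}],[\alpha^{1}])\colon H^{0}(\mathscr{P})\to H^{0}(\A)\times_{H^{0}(\A')}H^{0}(\A'')$ is bijective on objects and full (its kernel on $\operatorname{Hom}(P,Q)$ is the image of the connecting map out of $H^{-1}\A'(\F\beta P,\F\beta Q)$).

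For part (ii), assume moreover that $\F$ is a quasi-equivalence, so that each $\F^{1}$ is a surjective quasi-isomorphism and hence has acyclic kernel. Then $\ker\alpha^{1}$ is acyclic, and since $\alpha^{1}$ is onto (it satisfies (F1) by Theorem~\ref{teoremone}) the projection $\alpha^{1}\colon\mathscr{P}(P,Q)\to\A''(\alpha P,\alpha Q)$ is a quasi-isomorphism; thus $[\alpha]$ is fully faithful on cohomology. For surjectivity of $[\alpha]$ on objects: given $a''\in\A''$, essential surjectivity of $[\F]$ provides an isomorphism $\F_{0}(a_{1})\xrightarrow{\sim}\G_{0}(a'')$ in $H^{0}(\A')$ with $a_{1}\in\A$, and (F2) for $\F$ lifts it to an isomorphism $a_{1}\xrightarrow{\sim}a$ in $H^{0}(\A)$ with $\F_{0}(a)=\G_{0}(a'')$; then $(a,a'')$ is an object of $\mathscr{P}$ with $[\alpha](a,a'')=a''$. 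Hence $[\alpha]\colon H(\mathscr{P})\to H(\A'')$ is an equivalence of graded categories, i.e.\ $\alpha$ is a quasi-equivalence.

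For part (i) I must show that $[\alpha]\colon H^{0}(\mathscr{P})\to H^{0}(\A'')$ is an isofibration, and the heart of the matter is the following: a closed degree–zero morphism $(c,c'')\in\mathscr{P}(P,Q)$ whose components $[c]\in H^{0}(\A)$ and $[c'']\in H^{0}(\A'')$ are isomorphisms is itself an isomorphism in $H^{0}(\mathscr{P})$; equivalently, $\Phi$ reflects isomorphisms. To prove this I use the standard criterion that such a $(c,c'')$ is invertible in cohomology as soon as $m_{2}^{\mathscr{P}}((c,c''),-)$ and $m_{2}^{\mathscr{P}}(-,(c,c''))$ are quasi-isomorphisms on all hom–complexes. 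For the first, the \Ain-functor equations for $\beta$ and $\alpha$ give that $(\beta^{1},\alpha^{1})\circ m_{2}^{\mathscr{P}}((c,c''),-)$ is chain homotopic to $\bigl(m_{2}^{\A}(c,-)\oplus m_{2}^{\A''}(c'',-)\bigr)\circ(\beta^{1},\alpha^{1})$ (the homotopy being $(\beta^{2},\alpha^{2})((c,c''),-)$), and a similar computation with the equations for $\F$, $\G$ and the identity $\F^{1}c=\G^{1}c''$ gives that $(\F^{1}-\G^{1})\circ\bigl(m_{2}^{\A}(c,-)\oplus m_{2}^{\A''}(c'',-)\bigr)$ is chain homotopic to $m_{2}^{\A'}(\F^{1}c,-)\circ(\F^{1}-\G^{1})$. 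Thus $m_{2}^{\mathscr{P}}((c,c''),-)$ is, up to homotopy, the left vertical of a map between two instances of \eqref{eq:MV} whose other two verticals $m_{2}^{\A}(c,-)\oplus m_{2}^{\A''}(c'',-)$ and $m_{2}^{\A'}(\F^{1}c,-)$ are quasi-isomorphisms, because $[c]$, $[c'']$ and $[\F^{1}c]=[\F][c]$ are isomorphisms (the Yoneda-type fact that post-composition with a homotopy-isomorphism is a quasi-isomorphism on hom–complexes, applied in $\A$, $\A''$ and $\A'$). The long exact sequences and the five lemma then give that $m_{2}^{\mathscr{P}}((c,c''),-)$ is a quasi-isomorphism, and symmetrically for $m_{2}^{\mathscr{P}}(-,(c,c''))$, so $(c,c'')$ is an isomorphism in $H^{0}(\mathscr{P})$. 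Granting this, the isofibration property follows: given $P$ and an isomorphism $\phi\colon[\alpha]P\xrightarrow{\sim}b''$ in $H^{0}(\A'')$, apply $[\G]$ and use (F2) for $\F$ to lift $[\G]\phi\colon\F_{0}(\beta P)=\G_{0}(\alpha P)\xrightarrow{\sim}\G_{0}(b'')$ to an isomorphism $\psi\colon\beta P\xrightarrow{\sim}b$ in $H^{0}(\A)$ with $\F_{0}(b)=\G_{0}(b'')$; then $Q:=(b,b'')$ is an object of $\mathscr{P}$, the pair $(\psi,\phi)$ is a morphism of $H^{0}(\A)\times_{H^{0}(\A')}H^{0}(\A'')$, and by fullness of $\Phi$ it lifts to $f\colon P\to Q$ in $H^{0}(\mathscr{P})$ with $[\alpha]f=\phi$; by the previous step $f$ is an isomorphism.

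I expect the main obstacle to be exactly this last "reflects isomorphisms" step in part (i). A full functor that is bijective on objects need not reflect isomorphisms in general, and $\Phi$ need not be faithful here (its kernel is the image of the connecting map of \eqref{eq:MV}, which is typically nonzero when $\F$ is not a quasi-equivalence); it is precisely the homotopy-commutativity identities for $m_{2}^{\mathscr{P}}$ expressed through $\F^{2},\G^{2},\alpha^{2},\beta^{2}$ that force the conclusion. Everything else—the exactness of \eqref{eq:MV}, the identification of $\ker\alpha^{1}$, and the two applications of (F2) and of essential surjectivity—is routine given Theorem~\ref{teoremone}.
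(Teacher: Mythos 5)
Your argument follows the same route as the paper's proof: part (ii) is exactly the paper's steps FF) and ExSurj) (each hom-complex of $\mathscr{P}$ sits in a short exact sequence with kernel $\mbox{Ker}(\F^1)$, which is acyclic when $\F$ is an acyclic fibration, plus the standard essential-surjectivity argument using (F2) for $\F$), and in part (i) you build the candidate lift of an isomorphism exactly as in the paper's step IsoFib), by lifting $[\G]\phi$ through the isofibration $[\F]$ and pairing. Where you go beyond the paper is the ``reflects isomorphisms'' step: the paper simply sets $\tilde\phi:=(\psi,\phi)$ and declares itself done, whereas you correctly point out that a full, object-bijective functor to $H^0(\A)\times_{H^0(\A')}H^0(\A'')$ need not reflect isomorphisms (its kernel is the image of the connecting map and is typically nonzero in part (i)), and you supply the missing verification via the criterion that $m_2((c,c''),-)$ and $m_2(-,(c,c''))$ be quasi-isomorphisms. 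That criterion does require (cohomological) unitality of $\mathscr{P}$, which is available here by Corollary \ref{corollario}. This is a genuine improvement on the written proof.

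One step of your argument needs patching. You apply the five lemma to ladders of long exact sequences whose squares commute only up to the chain homotopies furnished by $\beta^{2}$, $\alpha^{2}$, $\F^{2}$, $\G^{2}$; homotopy-commutativity of the squares of a map of short exact sequences does not by itself give commutativity of the squares involving the connecting homomorphisms, so the five lemma cannot be invoked as stated. The fix is easy and is already built into the paper's model of the pullback from Section \ref{subquiv}: after conjugating by the formal diffeomorphism $\Phi=\mbox{Id}+\gamma$, both $\alpha=\mbox{pr}^1_{\A''}$ and $\mbox{pr}^1_{\A'}$ are \emph{strict} $\Ain$functors, so $m_2^{\mathscr{P}}((c,c''),-)$ strictly preserves the subcomplex $\mbox{Ker}(\alpha^1)\cong \mbox{Ker}(\F^1)$ and strictly covers $m_2^{\A''}(c'',-)$ on the quotient, while $m_2^{(K\times\A')_{S}}(\beta^1(c,c''),-)$ strictly covers $m_2^{\A'}(\F^1\beta^1(c,c''),-)$; the two resulting ladders commute on the nose and the five lemma applies twice, first to see that the induced map on $\mbox{Ker}(\F^1)$ is a quasi-isomorphism and then to conclude for $\mathscr{P}$. (Alternatively, replace kernels by cocones and use the five lemma for triangles.) With this repair your proof is complete.
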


\subsection*{State of art and related works}
Thanks to the works of Lef\`evre-Hasegawa \cite{LH} and Vallette \cite{Val}, it is known that the category of $\Ain$algebras, linear over a field, is a fibration category in the sense of \cite{Mei}.\ In this case, the weak equivalences are the $\Ain$quasi-isomorphisms and the fibrations are degreewise surjections (i.e. condition (F1)).\
In a few words, our Theorem \ref{teoremone} is a generalization to the (non flat) categories of Lef\`evre's \cite[Th\'eorème 1.3.3.1 (A)]{LH} for algebras.\ 
The main point here is the passage from (graded) $R$-modules to (graded) $R$-quivers.\ Let us note that the hypothesis of splitting in condition (F1) is essential (see section \ref{subquiv}) but this is not required in the definition of fibration for DG categories.\ 
For this reason, one expects the right notion of "fibration" in ($\mbox{A$_{\infty}$Cat}$,$W^{\tiny\mbox{A}_{\infty}}_{\tiny\mbox{qe}}$) to be the functors satisfying conditions (f1) and (F2) (not (F1)).\ So, our proof of the fibrancy of 
($\mbox{A$_{\infty}$Cat}$,$W^{\tiny\mbox{A}_{\infty}}_{\tiny\mbox{qe}}$) 
cannot be extended to categories linear over a commutative ring in this sense.\ 

On the other hand, using Pascaleff's procedure and \cite{COS2} we can prove that the homotopy theories of differential graded categories and of $\Ain$categories, over a commutative ring, 
are equivalent at the $(\infty,1)$-categorical level (see \cite[Theorem A]{COS2}).\\
Note that one can describe the homotopy theory of $\Ain$categories, over a commutative ring, 
via semi-free resolutions avoiding the fibrant point of view (cf. \cite{Orn2}).

We conclude by saying that the category $\aCat$ has a weak symmetric monoidal structure (see \cite{Orn4}) which deserves to be investigated at the $\infty$-level (see \cite[A.4.4]{Pas2}).\ We hope that this paper could give a contribution (considering $\Ain$categories linear over a field) in this direction.

\subsection*{Acknowledgements}
The authors thank Bernhard Keller for the useful conversations about Lefèvre-Hasegawa thesis and James Pascaleff for sharing his interest in the writing of this notes.


\section{Graded quivers and A$_{\infty}$structures}
We give a short background on graded quivers and $\Ain$structure.\
We fix a commutative ring $R$.\ All categories are assumed to be small and linear over $R$.
\begin{defn}[Graded Quiver]
A \emph{graded quiver} $\mathtt{Q}$ consists of a set of objects $\mbox{Ob}(\mathtt{Q})$ and a graded $R$-module $\mathtt{Q}(x_1,x_2)$, for every pair of objects $x_1,x_2\in\mbox{Ob}(\mathtt{Q})$. 
\end{defn}

\begin{exmp}
A graded $R$-module can be considered as a graded quiver with one object.  
\end{exmp}

\begin{defn}[Formal morphism of Graded Quivers]\label{formorph}
Let $\mathtt{Q}_1$ and $\mathtt{Q}_2$ be two graded quivers.\
A \emph{formal morphism $\mathsf{F}$ between $\mathtt{Q}_1$ and $\mathtt{Q}_2$} is the datum: 
$\mathsf{F}:=\mathcal{f}\mathsf{F}^n\mathcal{g}_{n\ge0}$ such that:
\begin{itemize}
\item[1.] If $n=0$, $\mathsf{F}^0:\mbox{Ob}(\mathtt{Q}_1)\to \mbox{Ob}(\mathtt{Q}_2)$ is a map of sets.
\item[2.] For every $n\ge1$ and $x_0,...,x_n\in \mbox{Ob}(\mathtt{Q})^{\times n+1}$, 
\begin{align*}
\mathsf{F}^n:\mathtt{Q}_1(x_{n-1},x_n)\otimes...\otimes \mathtt{Q}_1(x_0,x_1)\to \mathtt{Q}_2(\mathsf{F}_0(x_0),\mathsf{F}_0(x_n))[1-n]
\end{align*}
is a map of graded $R$-modules.
\end{itemize}
\end{defn}

\begin{rem}
Definition \ref{formorph} is a generalization of the definition of {formal diffeomorphism} by Seidel \cite[(1c)]{Sei}.\ A \emph{formal diffeomorphism} is a formal morphism such that $\mathtt{Q}_1=\mathtt{Q}_2$ and $\mathsf{F}_1$ 
is linear automorphism of $\mathtt{Q}_1(x,y)$, for every $x,y\in\mbox{Ob}(\mathtt{Q}_1)$. 
\end{rem}

\begin{defn}[Composition of formal morphisms]
Let $\mathsf{F}:\mathtt{Q}_1\to \mathtt{Q}_2$ and $\mathsf{G}:\mathtt{Q}_2\to \mathtt{Q}_3$ be two formal morphisms.\ The \emph{composition} $\mathsf{G}\cdot\mathsf{F}:\mathtt{Q}_1\to\mathtt{Q}_3$ is the formal morphism defined as follows
\begin{align*}
(\mathsf{G}\cdot\mathsf{F})^{n}(q_n,...,q_1):=\displaystyle\sum_{i_r+...+i_1=n} \mathsf{G}_r\big(\mathsf{F}_{i_r}(q_n,...,q_{n-i_r}),...,\mathsf{F}_{i_1}(q_{i_1},...,q_1)\big)
\end{align*}
for every $n\ge0$ and $q_j\in \mathtt{Q}_1(x_{j},x_{j+1})$.
\end{defn}

\begin{rem}
We fix a graded quiver $\mathtt{Q}$, a useful example of formal morphism is 
the \emph{identity functor} of $\mathtt{Q}$, defined as follows:
\begin{align*}
\mbox{Id}^n_{\mathtt{Q}}(q_n,...,q_1):=
\begin{cases}
q_1\mbox{, if $n=1$}\\
0\mbox{, otherwise}.
\end{cases}
\end{align*}
More generally, a formal morphism $\mathsf{F}$ such that $\mathsf{F}^{n\ge 2}=0$ is said to be a \emph{functor of graded quivers}.\\
The class of (graded) quivers with formal morphisms form a category which we denote by ${\textbf{Qu}}_{\tiny\mbox{Form}}$.\ It is important to note that this is different from the category of (graded) quivers with functors (of graded quivers), which is commonly denoted by ${\textbf{Qu}}$ (see e.g. \cite[\S1.2]{COS}).
\end{rem}

We fix two graded quivers $\mathtt{Q}_1$, $\mathtt{Q}_2$ and two formal morphisms $\mathsf{F},\mathsf{G}:\mathtt{Q}_1\to \mathtt{Q}_2$, as follows: 
\begin{align*}
\xymatrix@R=0.2em{
&  &\\
\mathtt{Q}_1\ar@/^2pc/[rr]^-{\mathsf{F}}\ar@/_2pc/[rr]_-{\mathsf{G}}&&\mathtt{Q}_2\\
&&
}
\end{align*}
Considering the formal morphisms as 1-morphisms, we can define the 2-morphisms between 1-morphisms as follows.
\begin{defn}[Prenatural transformation of Graded Quivers]
A \emph{prenatural transformation} $T$, of degree $g$, from $\mathsf{F}$ to $\mathsf{G}$ is given by the datum 
$T=\mathcal{f}T^n\mathcal{g}_{n\ge 0}$, such that:
\begin{itemize}
\item[1.] If $n=0$, for every $x\in\mbox{Ob}(\mathtt{Q}_1)$, we have:
$$T^0(x):\mathsf{F}^0(x)\to\mathsf{G}^0(x)\in\mathtt{Q}_2\big(\mathsf{F}^0(x),\mathsf{G}^0(x)\big).$$
\item[2.] For every $n\ge1$ and $x_0,...,x_n\in \mbox{Ob}(\mathtt{Q}_1)^{\times n+1}$:
\begin{align*}
T^n:\mathtt{Q}_1(x_{n-1},x_n)\otimes...\otimes\mathtt{Q}_1(x_0,x_1)\to \mathtt{Q}_2\big(\mathsf{F}^0(x_0),\mathsf{G}^0(x_n) \big)[g-n]
\end{align*}
is a map of graded $R$-modules.
\end{itemize}
\end{defn}
A prenatural transformation from $\mathsf{F}$ to $\mathsf{F}$ is called a \emph{prenatural $\mathsf{F}$-endotransformation}.\\

We consider the following situation: let $\mathsf{F}:\mathtt{Q}_1\to\mathtt{Q}_2$ and $\mathsf{G}:\mathtt{Q}_2\to\mathtt{Q}_3$ be two formal morphisms.\\
Given a prenatural $\mathsf{F}$-endotransformation $D'$ and a prenatural $\mathsf{G}$-endotransformation $D$:
\begin{align}\label{compo}
\xymatrix@R=1em{
&\ar@{=>}[dd]^{\tiny D'}& &\ar@{=>}[dd]^{D}&\\
\mathtt{Q}_1\ar@/^2pc/[rr]^-{\mathsf{F}}\ar@/_2pc/[rr]_-{\mathsf{F}}&&\mathtt{Q}_2\ar@/^2pc/[rr]^-{\mathsf{G}}\ar@/_2pc/[rr]_-{\mathsf{G}}&&\mathtt{Q}_3\\
&&&&
}
\end{align}
we can define the composition of $D$ and $D'$ as follows.

\begin{defn}[Composition of prenatural transformations]\label{compderiv}
Given $D$ and $D'$ two prenatural transformations as in (\ref{compo}).\ The \emph{composition} $D\circ D'$ is a prenatural $(\mathsf{G}\cdot\mathsf{F})$-endotransformation, defined as:
\begin{align}\label{zorgo}
(D\circ D')^n(q_n,...,q_1):=\displaystyle\sum_{i_r+...+i_1=n} (-1)^{\star} {D}^r\big(\mathsf{F}^{i_r}(q_n,...,q_{n-i_r}),...,D'^{i_k}(...),..,\mathsf{F}^{i_1}(q_{i_1},...,q_1)\big)
\end{align}
for every $q_n,...,q_1\in\mathtt{Q}_1(x_j,x_{j+1})$.
\end{defn}

Taking a graded quiver $\mathtt{Q}$ the Bar construction $B_{\infty}$ gives rise to a (reduced) graded cocomplete cocategory $\big(B_{\infty}(\mathtt{Q}),\Delta\big)$.\ Given two graded quivers $\mathtt{Q}_1,\mathtt{Q}_2$, we have:
\begin{align}\label{f2}
\xymatrix{
{\mathcal{f}\mbox{Formal morphisms: $\mathtt{Q}_1\to\mathtt{Q}_1$} \mathcal{g}}\ar@_{<->}[r]^-{1:1}&{\mathcal{f}\mbox{Functors of cocategories: $(B_{\infty}(\mathtt{Q}_1),\Delta) \to (B_{\infty}(\mathtt{Q}_2),\Delta)$}\mathcal{g} }
}
\end{align}
Fixing two graded quivers and two formal morphisms as follows
\[
\xymatrix{
\mathtt{Q}_1\ar@/^/[r]^{F}\ar@/_/[r]_{F'}&\mathtt{Q}_2
}
\]
we have a corresponding diagram in the category of (reduced) graded cocomplete cocategories: 
\[
\xymatrix{
\big(B_{\infty}(\mathtt{Q}_1),\Delta\big)\ar@/^/[r]^{B_{\infty}(F)}\ar@/_/[r]_{B_{\infty}(F')}&\big(B_{\infty}(\mathtt{Q}_2),\Delta\big).
}
\]
We have a bijection:
\begin{align}\label{f3}
\xymatrix{
{\mathcal{f}\mbox{Prenatural transformations: $F\to F'$}\mathcal{g}}\ar@_{<->}[r]^-{1:1}&\mathcal{f}\mbox{$\big(B_{\infty}(F),B_{\infty}(F')\big)$-coderivations}\mathcal{g}.
}
\end{align}
Given a prenatural transformation $D$ the $\big(B_{\infty}(F),B_{\infty}(F')\big)$-coderivation\footnote{By \cite[Lemma 2.2]{Orn1} we have that $B_{\infty}(G\cdot F)=B_{\infty}(G)\cdot B_{\infty}(F)$} $B_{\infty}(D)$ is defined in \cite[\S2.3]{Orn1}.\ Taking another prenatural transformation $D'$ such that $B_{\infty}(D')$ is a $\big(B_{\infty}(G),B_{\infty}(G')\big)$-coderivation as follows
\[
\xymatrix{
\big(B_{\infty}(\mathtt{Q}_1),\Delta\big)\ar@/^/[r]^{B_{\infty}(F)}\ar@/_/[r]_{B_{\infty}(F')}&\big(B_{\infty}(\mathtt{Q}_2),\Delta\big)\ar@/^/[r]^{B_{\infty}(G)}\ar@/_/[r]_{B_{\infty}(G')}&\big(B_{\infty}(\mathtt{Q}_3).
}
\]
It is easy to define the $(B_{\infty}(G\cdot F),B_{\infty}(G'\cdot F'))$-coderivation $B_{\infty}(D)\circ B_{\infty}(D')$.\ 
The signs $\star$ of (\ref{zorgo}) are defined via $B_{\infty}(D)\circ B_{\infty}(D')$.\\
In particular, for fixed a graded quiver $\mathtt{Q}$, we have a bijection:
\begin{align}\label{f4}
\xymatrix{
{\mathcal{f}\mbox{Prenatural $\Id_{\mathtt{Q}}$-endotransformations}\mathcal{g}}\ar@_{<->}[r]^-{1:1}&\mathcal{f}\mbox{$\big(\Id_{B_{\infty}(\mathtt{Q})},\Id_{B_{\infty}(\mathtt{Q})}\big)$-coderivations}\mathcal{g}.
}
\end{align}
We recall that a \emph{DG structure} on $B_{\infty}(\mathtt{Q})$ is a $(\Id_{B_{\infty}},\Id_{B_{\infty}})$-coderivation $d$ such that $d\circ d=0$.

\begin{defn}[$\Ain$structure]\label{ACat}
Let $\mathtt{Q}$  be a graded quiver.\ 
An $\Ain$structure on $\mathtt{Q}$ is a prenatural $\mbox{Id}_{\mathtt{Q}}$-endotransformation $D$ such that 
$B_{\infty}(D)\circ B_{\infty}(D)=0$.
\end{defn}

\begin{exmp}
A graded $R$-module equipped with an $\Ain$structure is said to be an \emph{$\Ain$algebra}.  
\end{exmp}

The category of non unital $\Ain$categories is equivalent to the category of (reduced) cocomplete DG-cocategories.\
Namely we have the followings bijection.\
Let $\mathtt{Q}$ be a graded quiver:
\begin{align}\label{f1}
\xymatrix{
{\mbox{$\mathcal{f}$$\Ain$structure on $\mathtt{Q}$$\mathcal{g}$}}\ar@_{<->}[r]^-{1:1}&{\mbox{$\mathcal{f}$DG structure on $B_{\infty}(\mathtt{Q})$$\mathcal{g}$}}
}
\end{align}
The bijections $(\ref{f1})$, $(\ref{f2})$ and $(\ref{f3})$ are well known to the experts, an explicit description can be found in 
\cite[\S 2.1,\S 2.2,\S 2.3]{Orn1}.
\begin{rem}\label{DefClassica}
We point out that Definition \ref{ACat} is not the usual definition of $\Ain$category.\  
To get to "classical" definition of $\Ain$category one has to assume that:
\begin{itemize} 
\item[A1.] $D^{0}(x)=0$, for every object $x$.\
\end{itemize}
Some authors call \emph{flat $\Ain$categories} the $\Ain$categories with $D^0=0$.
\end{rem}

\begin{rem}\label{exxpli}
Let $(\A,m_{\A})$ be an $\Ain$category as in Definition \ref{ACat} such that $m^0=0$.\ 
Unwinding formula ($\ref{zorgo}$) and taking into account the signs (see \cite[\S2.1]{Orn1}, \cite[\S1.2]{COS2}) we get the formula: 

\begin{align*}
\displaystyle\sum^n_{m=1}\sum^{n-m}_{d=0}(-1)^{\dagger_d} m_{\A}^{n-m+1}(f_n,...,f_{d+m+1},m_{\A}^{m}(f_{d+m},...,f_{d+1}),f_d,...,f_1)=&0,
\end{align*}
where $\dagger_d=\mbox{deg}(f_d)+...+\mbox{deg}(f_1)-d$.\
See \cite[Definition 1.1]{COS} \cite[(1a)]{Sei}, \cite[Définition 1.2.1.1]{LH}).
\end{rem}

\begin{defn}[Strictly unital $\Ain$category]
A \emph{strictly unital} $\Ain$category $\A$ is an $\Ain$category such that, for every object $a\in\mbox{Ob}(\A)$, there exists a morphism $1_a\in\A(a,a)$ satisfying the following:
\begin{itemize}
\item[u1.] $D^{n\not=2}(f_n,...,1_a,...,f_1)=0$, $\forall f_1,...,f_n\in\A$.
\item[u2.] $D^2(1_{a_2},f)=D^2(f,1_{a_1})=f$, for every $f:a_1\to a_2$.
\end{itemize}
\end{defn}

\subsection{Composition functors}

Let $\mathtt{A},\mathtt{B}$ two graded quivers.\ We can form the graded quiver
\begin{align*}
\Fun(\mathtt{A},\mathtt{B}).
\end{align*}
The objects are the formal morphisms and the morphisms are the prenatural transformations.\\
Given a formal morphism $\mathsf{F}:\mathsf{A}\to\mathsf{B}$ we have
\begin{align*}
\mathscr{L}_{\mathsf{F}}:\Fun(\mathtt{A},\mathtt{A})(\mbox{Id}_{\mathtt{A}},\mbox{Id}_{\mathtt{A}})\to \Fun(\mathtt{A},\mathtt{B})(\mathsf{F},\mathsf{F})
\end{align*} 
Given $D\in \Fun(\mathtt{A},\mathtt{A})(\mbox{Id}_{\mathtt{A}},\mbox{Id}_{\mathtt{A}})$ we have:
\begin{align*}
\mathscr{L}_{\mathsf{F}}(D)^n(a_n,...,a_1):=\displaystyle\sum_{}(-1)^{\heartsuit} \mathsf{F}^j\big(a_n,...,D_r(...),...,a_1\big).
\end{align*}
Here $\heartsuit$ can be found in \cite[(1d)]{Sei}.\\
On the other hand, 
\begin{align*}
\mathscr{R}_{\mathsf{F}}:\Fun(\mathtt{B},\mathtt{B})(\mbox{Id}_{\mathtt{B}},\mbox{Id}_{\mathtt{B}})\to \Fun(\mathtt{A},\mathtt{B})(\mathsf{F},\mathsf{F})
\end{align*}
Given $D'\in \Fun(\mathtt{B},\mathtt{B})(\mbox{Id}_{\mathtt{B}},\mbox{Id}_{\mathtt{B}})$ we have:
\begin{align*}
\mathscr{R}_{\mathsf{F}}(D')^n(b_n,...,b_1):=\displaystyle\sum_{}D'^r \big(\mathsf{F}^{i_r}(b_{n},...,b_{n-i_r}),...,\mathsf{F}^{i_1}(a_{i_1},...,a_1)\big).
\end{align*}

We list some useful properties from \cite[pp.12]{Sei}:
\begin{pro}
Let $\mathtt{A}$, $\mathtt{B}$ be two graded quivers and a formal morphism:
\begin{align*}
\xymatrix{
\mathtt{A}\ar[r]^{\mathsf{F}}&\mathtt{B}.
}
\end{align*}
Given $D_{\mathtt{A}}\in \Fun(\mathtt{A},\mathtt{A})(\mbox{Id}_{\mathtt{A}},\mbox{Id}_{\mathtt{A}})$ and $D_{\mathtt{B}}\in \Fun(\mathtt{B},\mathtt{B})(\mbox{Id}_{\mathtt{B}},\mbox{Id}_{\mathtt{B}})$.\ It is not difficult to prove:
\begin{itemize} 
\item[1.] $\mathscr{L}_{\mathsf{F}}(D_{{\mathtt{B}}}\circ D_{\mathtt{A}})=\mathscr{L}_{\mathsf{F}}(D_{{\mathtt{B}}}) \circ D_{{\mathtt{A}}}$.
\item[2.] $\mathscr{R}_{\mathsf{F}}(D_{{\mathtt{B}}}\circ D_{{\mathtt{B}}})=\mathscr{R}_{\mathsf{F}}(D_{{\mathtt{B}}}) \circ D_{{\mathtt{A}}}$.
\end{itemize}
Given another quiver $\mathtt{C}$, a formal morphism $\mathsf{G}$ as follows:
\begin{align*}
\xymatrix{
\mathtt{A}\ar[r]^{\mathsf{F}}&\mathtt{B}\ar[r]^{\mathsf{G}}&\mathtt{C}.
}
\end{align*}
and $D_{\mathtt{C}}\in\Fun(\mathtt{C},\mathtt{C})(\Id_{\mathtt{C}},\Id_{\mathtt{C}})$, one can prove:
\begin{itemize}
\item[3.] There is an equality:
$$\mathscr{R}_{\mathsf{F}}\big( \mathscr{R}_{\mathsf{G}}(D_{\mathtt{C}}) \big)=\mathscr{R}_{\mathsf{G}\cdot \mathsf{F}}(D_{\mathtt{C}}),$$
in $\Fun(\mathtt{A},\mathtt{C})(\mathsf{G}\cdot \mathsf{F}, \mathsf{G}\cdot \mathsf{F})$.
\item[4.] There is an equality:
$$\mathscr{L}_{\mathsf{G}}\big( \mathscr{L}_{\mathsf{F}}(D_{\mathtt{A}}) \big)=\mathscr{L}_{\mathsf{G}\cdot \mathsf{F}}(D_{\mathtt{A}}),$$
in $\Fun(\mathtt{A},\mathtt{C})(\mathsf{G}\cdot \mathsf{F}, \mathsf{G}\cdot \mathsf{F})$.
\item[5.] There is an equality:
$$\mathscr{L}_{\mathsf{G}}\big( \mathscr{R}_{\mathsf{F}}(D_{\mathtt{B}})\big)=\mathscr{R}_{\mathsf{F}}\big( \mathscr{L}_{\mathsf{G}}(D_{\mathtt{B}}) \big),$$
In $\Fun(\mathtt{A},\mathtt{C})(\mathsf{G}\cdot \mathsf{F}, \mathsf{G}\cdot \mathsf{F})$.
\end{itemize}
\end{pro}

\begin{defn}[$\Ain$functor]\label{afun}
Given two $\Ain$categories $(\mathtt{A},D_{\mathtt{A}})$ and $(\mathtt{B},D_{\mathtt{B}})$ a formal morphism $\mathsf{F}$ is an \emph{$\Ain$functor} if $\mathscr{L}_{\mathsf{F}}(D_{\mathtt{A}})=\mathscr{R}_{\mathsf{F}}(D_{\mathtt{B}})$.
\end{defn}

\begin{rem}
As in Remark \ref{exxpli} we can make explicit Definition \ref{afun}.\ 
Let $\F$ be a formal morphism between two $\Ain$categories $(\A,m_{\A})$ and $(\B,m_{\B})$.\ 
\begin{align*}
\displaystyle\sum^n_{r=1}\sum_{s_1,...,s_r}&m_{\B}^{r}\big(\F^{s_r}(f_n,...,f_{n-s_r+1}),...,\F^{d}(f_d,...,f_1)\big)=\\
&=\displaystyle\sum_{m,n}(-1)^{\dagger_d} \F^{n-m+1}\big(f_n,...,f_{d+m+1},m_{\A}^{m}(f_{d+m},...,f_{d+1}),f_d,...,f_1 \big)
\end{align*}
See \cite[(1b)]{Sei}, \cite[Definition 1.2.1]{COS} and \cite[Définition 1.2.1.2]{LH}.
\end{rem}

\begin{defn}\label{Afununit}
A \emph{strictly unital} $\Ain$functor $\F:\A\to\B$ is an $\Ain$functor between strictly unital $\Ain$categories such that:
\begin{itemize}
\item[fu1.] $\F^1(1_x)=1_{\F^0(x)}$, $\forall x\in\mbox{Ob}(\A)$.
\item[fu2.] $\F^n(...,1_x,...)=0$.
\end{itemize}
\end{defn}

\section{Sub quivers generated by objects}\label{subquiv}

In this section we define the notion of \emph{sub quiver generated by objects}, which can be considered as a particular case of \cite[\S 3]{KM}, and we define the (categorical) product of two formal morphisms.\ The goal of this subsection is Theorem \ref{teorema2}.\ Namely, we found an $\Ain$category making diagram (\ref{eq:4}) commutative.\ This category is a candidate to be the pullback of diagram (\ref{eq:1}).\\

Through this section we assume the following setup:
\begin{align*}
\F:\A\twoheadrightarrow\A' 
\end{align*}
is an $\Ain$functor satisfying (F1).\ It means that, fixing two objects $x,y\in\mbox{Ob}(\A)$, we have
a splitting short exact sequence:
\begin{equation}\label{equi3}
\xymatrix{
0\ar[r]&\mbox{Ker}\big(\F^1(x,y)\big)\ar@{_(->}[r]_-{i^1}&\ar@/_/[l]_-{r^1}\A(x,y)\ar@{->>}[r]_-{\F^1}&\ar@/_/[l]_-{s^1}\A'\big(\F^0(x),\F^0(y)\big)\ar[r]&0
}
\end{equation}
in the category of graded $R$-modules.\\
Every morphism of $\A(x,y)$ can be written as:
\begin{align*}
\A(x,y)&\simeq \mbox{Ker}\big(\F^1(x,y)\big)\oplus \A'\big(\F^0(x),\F^0(y)\big).\\
f &\mapsto \big(r^1(f),\F^1(f)\big)\\
i^1(g)+s^1(h)&\leftarrow (g,h).
\end{align*}
We define the graded quiver $(K\times \A)_{S}$ as follows:
\begin{itemize}
\item[1.] $\mbox{Ob}\big( (K\times \A)_{S} \big):=\mathcal{f} (a,\F^0(a))\in \mbox{Ob}(\A)\times\mbox{Ob}(\A')\mathcal{g}\leftrightarrow \mbox{Ob}(\A)$,
\item[2.] Given two objects $(a_1,\F^0(a_1),(a_2,\F^0(a_2)\in \mbox{Ob}\big( (K\times \A)_{S} \big)$, we define 
\begin{align*}
(K\times\A')_{S}\big( (a_1,\F^0(a_1)),(a_2,\F^0(a_2)) \big):=\mbox{Ker}\big(\F^1(a_1,a_2)\big)\oplus \A'\big(\F^0(a_1),\F^0(a_2)\big).
\end{align*}
\end{itemize}
It is easy to see that $\A$ is isomorphic to $(K\times \A')_{S}$ as graded quivers.\\ 
On the other hand, thanks to bijection (\ref{f1}), $B_{\infty}(\A)$ is isomorphic to $B_{\infty}\big((K\times \A')_{S}\big)$ as (reduced) cocomplete cocategory.\\

We have an endomorphism $\gamma$ of the (reduced) graded cocomplete cocategory $B_{\infty}(\A)$ defined as follows:

\begin{itemize}
\item[1.] $\gamma^0:=\Id_{\tiny\mbox{Ob}(\A)}$,
\item[2.] Given a morphism $f_n[1]\otimes...\otimes f_1[1]\in B_{\infty}(\A)$ we define:
\begin{align*}
\gamma^1(f_n[1]\otimes...\otimes f_1[1]):=\displaystyle\sum^{n}_{k=1}\sum_{\star^n_k}s^1_{i_1}\F^{i_1}(f_n,...,f_{i_n-i_1+1})[1]\otimes...\otimes s^1_{i_k}\F^{i_k}(f_{i_k},...,f_1)[1]\big).
\end{align*}
Where $\displaystyle\sum_{\star^n_k}$ denotes  $\displaystyle\sum_{i_k+...+i_1=n}$ such that there exists at least one $i_j>1$, and 
$s^1_{i_j}$ denotes a splitting as in (\ref{equi3}).\ 
Namely, if $\F^{i_j}(...):x_{i_j}\to y_{i_j}$ then $s^1_{i_j}$ is a splitting of $$\F^1:\A(x_{i_j},y_{i_j})\twoheadrightarrow\A'(\F^0(x_{i_j}),\F^0(y_{i_j})).$$
\end{itemize}
The verification that $\gamma$ is a functor of cocategories is left to the reader.\ 
Note that, if $\F^0$ is a bijection, then we can make $s^1$ a functor of (graded) categories.\ In this case
\begin{align*}
\gamma=B_{\infty}(s^1)\big(B_{\infty}(\F)-B_{\infty}(\F^1)\big).
\end{align*}
Here $\F^1:\A\to\A'$ is the formal morphism of graded quivers given by $\mathcal{f}\F^0,\F^1,0,0,...\mathcal{g}$.\ One can find the explicit formulas of $B_{\infty}(s^1)$, $B_{\infty}(\F^1)$ and $B_{\infty}(\F)$ in \cite[Example 2.2.1]{Orn1} and \cite[\S2.2]{Orn1}\footnote{Note that $B_{\infty}$ sends a functor $\mathsf{F}$ of (graded) category to a functor $B_{\infty}(\mathsf{F})$ of (graded) cocategories.\ Moreover, if $\mathsf{F}$ is an A$_{\infty}$functor (between $\Ain$categories) then $B_{\infty}(\mathsf{F})$ is a DG functor (between DG cocategories)}.\

Despite $\A$ and $\A'$ are $\Ain$categories, in general $\F^1$ is not an $\Ain$functor.\\ 
We have an automorphism of $B_{\infty}(\A)$ defined as follows:
\begin{align*}
\Phi:= \mbox{Id}_{B_{\infty}(\A)}+\gamma.
\end{align*}
First we need to say that, fixed two objects $x,y\in \mbox{Ob}\big(B_{\infty}(\A)\big)$, we have a complete filtration:
\begin{align*}
0\subset F_1(x,y)\subset ... \subset F_n(x,y) \subset ... \subset B_{\infty}(\A)(x,y).
\end{align*}
Where $F_n(x,y)$ denotes the graded complex of the morphisms of the form $f_j[1]\otimes...\otimes f_1[1]$, where $j\le n$ and $f_i\in\A$ for every $1\le i \le j$.\\
The inverse of $\Phi$ is defined as follows:
\begin{align*}
\Psi(f_n[1]\otimes...\otimes f_1[1]):=\big(\mbox{Id}_{B_{\infty}(\A)}-\gamma+...+(-1)^n \gamma^{\cdot n}\big) (f_n[1]\otimes...\otimes f_1[1]).
\end{align*}
It is not difficult prove that the following diagrams:
\begin{align}\label{toror}
\xymatrix{
B_{\infty}(\A)\ar[d]_{\Phi}^{\cong}\ar[rr]^{B_{\infty}(\F)}&&B_{\infty}(\A')\ar@{=}[d]\\
B_{\infty}(\A)\ar[rr]_-{B_{\infty}(\F^1)}&&B_{\infty}(\A')
}
\mbox{ \hspace{3cm} }
\xymatrix{
B_{\infty}(\A)\ar[rr]^{B_{\infty}(\F)}&&B_{\infty}(\A')\ar@{=}[d]\\
B_{\infty}(\A)\ar[u]^{\Psi}_{\cong}\ar[rr]_-{B_{\infty}(\F^1)}&&B_{\infty}(\A')
}
\end{align}
are commutative in the category of (reduced) cocomplete cocategories.\\
Since the category of $\Ain$categories are in 1:1 correspondence with the category of reduced cocomplete DG cocategories, 
diagram $(\ref{princip})$ has pullback if and only if the following diagram 
\begin{equation}\label{pllk2}
\xymatrix{
&&B_{\infty}(\A'')\ar[d]^-{B_{\infty}(\G)}\\
B_{\infty}(\A) \ar@{->>}[rr]^-{\tiny B_{\infty}(\F)}&&B_{\infty}(\A')
}
\end{equation}
has pullback in the category of reduced cocomplete DG cocategories.\\
On the other hand, we can define a differential $\hat{D}:=\Psi\cdot D_{\A}\cdot\Phi$ on $B_{\infty}(\A)$, making the diagrams in ($\ref{toror}$) commutative in the category of reduced cocomplete DG cocategories.\\
It follows that $(\ref{princip})$ has pullback if the following diagram:
\begin{equation}\label{tryu}
\xymatrix{
&&\big(B_{\infty}(\A''),\Delta,D_{\A''}\big)\ar[d]^-{B_{\infty}(\G)}\\
\big(B_{\infty}(\A),\Delta,\hat{D}\big) \ar@{->>}[rr]^-{\tiny B_{\infty}(\F^1)}&&\big(B_{\infty}(\A'),\Delta,D_{\A'}\big)
}
\end{equation}
has pullback in the category of the DG reduced cocomplete cocategories.\\
Using the isomorphism of graded quivers 
$$\A\simeq \big( K\times \A'\big)_{S}$$
we can put an $\Ain$structure ${m}_{(K\times A')_{S}}$ on the graded quiver $\big( K\times \A'\big)_{S}$.\\
So $(\ref{eq:3})$ has limit in the category of $\Ain$categories iff the following diagram
\begin{equation}\label{eq:3}
\xymatrix{
&\A''\ar[d]^-{\G}\\
(K\times\A')_{S} \ar@{->>}[r]^-{\tiny\mbox{pr}^1_{\A'}}&\A'
}
\end{equation}
has pullback in $\aCat$.\ Here $\mbox{pr}^1_{\A'}$ is the strict $\Ain$functor (defined by $\F_1$) and $(K\times\A')_{S}$ has the $\Ain$structure ${m}_{(K\times A')_{S}}$.\ Since $\mbox{pr}^1_{\A'}$ is an $\Ain$functor wrt the $\Ain$structure ${m}_{(K\times A')_{S}}$ we have:
\begin{align*}
\mbox{pr}^1_{\A'}\big(m_{(K\times A')_{S}}^n\big((k_n,a'_n),...,(k_1,a'_1) \big)\big):=m_{\A'}^n(a_n,...,a_1).
\end{align*}

\begin{rem}
Despite $\A$, $\A'$ and $\A''$ are $\Ain$categories, in general, we cannot put an $\Ain$structure on $K$ making $\A\simeq (K\times \A')_S$ where $K\times\A'$ is the categorical product in $\aCat$.\ 
It happens if the sequence $(\ref{equi3})$ splits as DG complex not as graded complex.\
\end{rem}

\begin{rem}
If $\A$, $\A'$ and $\A''$ are $\Ain$algebras then we can take directly the graded $R$-module $K\times\A'$, we don't need the subquiver $(K\times\A')_{S}$.\ In this case $(\ref{tryu})$ corresponds to Lefèvre-Hasegawa \cite[Lemme 1.3.3.3]{LH}.
\end{rem}

\subsection{The product of two formal morphisms between two graded quivers}
We define the graded quiver $(K\times\A'')_{S'}$ as follows:
\begin{itemize}
\item[1.] $\mbox{Ob}\big( (K\times\A'')_{S'}\big):=\mathcal{f}\mbox{$(x,y)\in\mbox{Ob}(\A)\times\mbox{Ob}(\A'')$ such that $\F^0(x)=\G^0(y)$} \mathcal{g}$,
\item[2.] $(K\times\A'')_{S'}\big( (x_1,y_1),(x_2,y_2)\big):=K(x_1,x_2)\oplus\A''(y_1,y_2)$.
\end{itemize}
The quiver $(K\times\A'')_{S'}$ is a sub quiver (generated by a set of objects) of the quiver $K\times\A''$ which is the categorical product of the two quivers $K$ and $\A''$.\ We can define the formal morphism 
\begin{align*}
\mbox{Id}_{K}\times \G: K\times \A''\to K\times \A'
\end{align*} 
as follows:
\begin{itemize}
\item[P1.] Given $(x,y)\in \mbox{Ob}(\A)\times\mbox{Ob}(\A'')$ we have:
\begin{align*}
(\mbox{Id}_{K}\times \G)^0(x,y):=\big(x,\G^0(x)\big).
\end{align*}
\item[P2.] For every $n>0$:
\begin{equation}\label{morphi}
\big(\mbox{Id}_{K}\times \G\big)^{n}\big( (k_n,a''_n),...,(k_1,a_1'')\big):=
\begin{cases}
\big(k_1,\G^1(a''_1)\big), \mbox{if $n=1$}\\
\big(0,\G^n(a''_n,...,a''_1)\big), \mbox{if $n\ge2$}.
\end{cases}
\end{equation}
For every $(k_j,a''_j)\in K(x_j,x_{j+1})\oplus\A''(y_j,y_{j+1})$, $1\ge j\ge n$.
\end{itemize}

We need a Lemma first.

\begin{lem}\label{principone}
$\mbox{Im}\big(\mbox{Id}_{K}\times\G |_{\tiny(K\times\A'')_{S'}}\big)\subset (K\times\A')_{S}$.
\end{lem}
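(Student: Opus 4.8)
The plan is to verify the claimed inclusion directly by computing the image of a typical hom-space of the subquiver $(K\times\A'')_{S'}$ under the formal morphism $\mbox{Id}_K\times\G$, and checking that the target lands in the corresponding hom-space of $(K\times\A')_S$. First I would recall the two relevant object sets: a typical object of $(K\times\A'')_{S'}$ is a pair $(x,y)$ with $x\in\mbox{Ob}(\A)$, $y\in\mbox{Ob}(\A'')$ and $\F^0(x)=\G^0(y)$; applying $(\mbox{Id}_K\times\G)^0$ sends it to $(x,\G^0(x))$. Wait — here one must be careful about the two uses of ``$\G$'': the object map of $\mbox{Id}_K\times\G$ on $K\times\A''$ is forced to use $\G^0$ on the $\A''$-component, but to land in $\mbox{Ob}((K\times\A')_S)=\{(a,\F^0(a))\}$ one needs the second coordinate of the image to be $\F^0$ of the first. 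So the key point at the level of objects is precisely the equation $\F^0(x)=\G^0(y)$ defining $(K\times\A'')_{S'}$, which guarantees $(x,\G^0(y))$ is of the form $(x,\F^0(x))$, i.e.\ is a legitimate object of $(K\times\A')_S$.

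Next I would treat the morphism spaces. A morphism of $(K\times\A'')_{S'}$ from $(x_1,y_1)$ to $(x_2,y_2)$ is, by definition, a tensor word $(k_n,a''_n)\otimes\cdots\otimes(k_1,a''_1)$ with $(k_j,a''_j)\in K(x_j,x_{j+1})\oplus\A''(y_j,y_{j+1})$ (using the chain of intermediate objects all lying in the subquiver). Applying formula (\ref{morphi}), the output of $(\mbox{Id}_K\times\G)^n$ is $(k_1,\G^1(a''_1))$ when $n=1$ and $(0,\G^n(a''_n,\dots,a''_1))$ when $n\ge2$. In either case the first coordinate lies in $K(x_1,x_2)$ (it is $k_1$ or $0$), and the second coordinate lies in $\A'(\G^0(y_1),\G^0(y_2))=\A'(\F^0(x_1),\F^0(x_2))$, where the equality again uses that $(x_i,y_i)$ are objects of the subquiver. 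Hence the image lies in $K(x_1,x_2)\oplus\A'(\F^0(x_1),\F^0(x_2))$, which is exactly $(K\times\A')_S((x_1,\F^0(x_1)),(x_2,\F^0(x_2)))$ by the definition of $(K\times\A')_S$. This is the required containment.

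I would then note that one subtlety is that the source intermediate objects $x_j$ (for $j=2,\dots,n$) need not a priori satisfy a constraint, but since we restrict to the subquiver $(K\times\A'')_{S'}$ the intermediate objects $(x_j,y_j)$ also satisfy $\F^0(x_j)=\G^0(y_j)$, so the composites of the $\A'$-components chain up correctly; this is what makes the second-coordinate computation consistent. The main (and only) obstacle is therefore bookkeeping: making sure the object-level identity $\F^0(x_i)=\G^0(y_i)$ is invoked at the right place so that the target hom-space is indeed the one indexed by $(x_i,\F^0(x_i))$, and observing that the first coordinate of the image is automatically an element of $K$ (either a genuine $k_1$ or zero, never a nontrivial element of $\A'$) so nothing escapes the summand $K\oplus\A'$. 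Everything else is a direct unwinding of the definitions of $(K\times\A'')_{S'}$, $(K\times\A')_S$ and formula (\ref{morphi}), with no sign or $\Ain$-structure issues entering, since the statement is purely about graded quivers and the underlying formal morphism.
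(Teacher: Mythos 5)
Your proof is correct and rests on exactly the same key observation as the paper's: the defining equation $\F^0(x)=\G^0(y)$ of $(K\times\A'')_{S'}$ forces $(\mbox{Id}_K\times\G)^0(x,y)=(x,\G^0(y))=(x,\F^0(x))$, which is an object of $(K\times\A')_S$. The paper's proof stops at this object-level check; your additional verification that the morphism components land in $K(x_1,x_2)\oplus\A'(\F^0(x_1),\F^0(x_2))$ is routine bookkeeping that the paper leaves implicit, so the two arguments are essentially identical.
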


\begin{proof}
An object in $\big(K\times\A''\big)_{S'}$ is of the form $(x,y)$ such that $\F^0(x)=\G^0(y)$.\\
By definition $(\mbox{Id}_{K}\times \G)^0(x,y)=(x,\G^0(y))=(x,\F^0(x))\subset \mbox{Ob}\big((K\times\A')_{S}\big)$ and we are done.
\end{proof}

We have the following commutative diagram in the category of graded quivers (with formal morphisms):
\begin{equation}\label{eq:4}
\xymatrix{
(K\times\A'')_{S'}\ar[d]_{\tiny\mbox{Id}_{K}\times \G}\ar[r]^-{\tiny\mbox{pr}^1_{\A''}}&\A''\ar[d]^-{\G}\\
(K\times\A')_{S} \ar@{->>}[r]^-{\tiny\mbox{pr}^1_{\A'}}&\A'
}
\end{equation}
or, equivalently, we have the following commutative diagram:
\begin{equation*}
\xymatrix{
B_{\infty}\big((K\times\A'')_{S'}\big)\ar@{->>}[rr]^-{\tiny B_{\infty}(\mbox{pr}^1_{\A''})}\ar[d]^{B_{\infty}(\tiny\mbox{Id}_{K}\times \G)}&&B_{\infty}(\A'')\ar[d]^-{B_{\infty}(\G)}\\
B_{\infty}\big((K\times\A')_{S}\big) \ar@{->>}[rr]^-{\tiny B_{\infty}(\mbox{pr}^1_{\A'})}&&B_{\infty}(\A')
}
\end{equation*}
in the category of cocomplete graded cocategories.\ Note that $\mbox{pr}_{\A''}^1$ is strict.\\
We recall that $B_{\infty}(\A'')$, $B_{\infty}\big((K\times\A')_{S}\big)$ and $B_{\infty}(\A')$ have a DG structure (see (\ref{tryu})), or equivalently $\A'',\A'$ and $(K\times\A')_S$ have an $\Ain$structure.\\
In the next subsection we equip the graded quiver $(K\times\A'')_{S'}$ with an $\Ain$structure making (\ref{eq:4}) commutative in the category of $\Ain$categories.

\subsection{The $\Ain$structure on $(K\times\A'')_{S'}$}
The goal of this subsection is the following result:
\begin{thm}\label{teorema2}
The graded quiver $(K\times\A'')_{S'}$ has an $\Ain$structure making ($\ref{eq:4}$) commutative. 
\end{thm}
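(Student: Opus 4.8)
\textbf{Proof plan for Theorem \ref{teorema2}.}

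The strategy is to transport the $\Ain$-structure on $(K\times\A')_S$ along the (categorical) product formal morphism $\mbox{Id}_K\times\G$, combined with the strict structure inherited from $\A''$, and then to check that the resulting prenatural $\mbox{Id}$-endotransformation squares to zero. Concretely, for a composable string $(k_n,a''_n),\dots,(k_1,a''_1)$ of morphisms in $(K\times\A'')_{S'}$ I would set the first component of $m_{(K\times\A'')_{S'}}$ to be read off from $m_{(K\times\A')_S}$ applied to the image under $\mbox{Id}_K\times\G$ (using Lemma \ref{principone} to know this image lands in $(K\times\A')_S$), and the second component to be $m_{\A''}$ applied to the $\A''$-components $a''_n,\dots,a''_1$. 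Writing $m_{(K\times\A')_S}=(m^K, m_{\A'}\circ\mbox{pr})$ with respect to the decomposition on hom-objects, the $K$-component of the new structure is defined so that $(\mbox{Id}_K\times\G)^1$ on the $K$-summand is the identity and so that the $\Ain$-functor relation for $\mbox{Id}_K\times\G$ holds by construction; the second summand is forced by requiring $\mbox{pr}^1_{\A''}$ to be a strict $\Ain$-functor. In Bar language: I would define the coderivation $D_{(K\times\A'')_{S'}}$ on $B_\infty((K\times\A'')_{S'})$ as the restriction, along the cocategory inclusion $B_\infty((K\times\A'')_{S'})\hookrightarrow B_\infty(K\times\A'')$, of the unique coderivation over $\mbox{Id}$ whose projection to cogenerators matches $m_{\A''}$ on the $\A''$-leg and matches the pulled-back $K$-component on the $K$-leg; one checks this coderivation preserves the subcocategory $B_\infty((K\times\A'')_{S'})$ because its object map is $\Id$ and its structure maps respect the defining object condition $\F^0(x)=\G^0(y)$.

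The verification then splits into two independent checks. First, commutativity of (\ref{eq:4}) in $\aCat$: that $\mbox{pr}^1_{\A''}$ is a strict $\Ain$-functor is immediate from the definition of the $\A''$-leg; that $\mbox{Id}_K\times\G$ is an $\Ain$-functor $(K\times\A'')_{S'}\to(K\times\A')_S$ amounts to the identity $\mathscr{L}_{\mbox{Id}_K\times\G}(D_{(K\times\A'')_{S'}})=\mathscr{R}_{\mbox{Id}_K\times\G}(D_{(K\times\A')_S})$, which I would deduce by writing $\mbox{Id}_K\times\G$ componentwise: on the $\A''$-leg it reduces to the statement that $\G:\A''\to\A'$ is an $\Ain$-functor (the given hypothesis), and on the $K$-leg it holds tautologically since the $K$-component of $m_{(K\times\A'')_{S'}}$ was \emph{defined} as the pullback of the $K$-component of $m_{(K\times\A')_S}$ through the formula (\ref{morphi}) for $(\mbox{Id}_K\times\G)^n$. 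Here I would use Properties 1--5 above to manipulate the $\mathscr{L}$ and $\mathscr{R}$ operators and reduce everything to the two legs separately.

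Second, and this is where the real work lies, I must check $B_\infty(D_{(K\times\A'')_{S'}})\circ B_\infty(D_{(K\times\A'')_{S'}})=0$, i.e. that the transported data is genuinely an $\Ain$-structure and not merely a prenatural endotransformation. The cleanest route is to observe that $B_\infty(\mbox{Id}_K\times\G)$ is injective on the $K$-leg (its first component there is the identity) so that $D^2=0$ can be detected after composing with enough of $B_\infty(\mbox{Id}_K\times\G)$ and $B_\infty(\mbox{pr}^1_{\A''})$ together: the $\A''$-component of $D^2$ vanishes because $m_{\A''}$ is an $\Ain$-structure, and the $K$-component of $D^2$ vanishes because, by the $\Ain$-functor relation established in the previous paragraph together with $m_{(K\times\A')_S}$ being an $\Ain$-structure, applying $(\mbox{Id}_K\times\G)$ to it yields $0$ in $(K\times\A')_S$ while injectivity on the $K$-leg lets one conclude. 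The main obstacle is precisely this last injectivity/detection argument: one must be careful that the pair $\big(B_\infty(\mbox{pr}^1_{\A''}), B_\infty(\mbox{Id}_K\times\G)\big)$ is \emph{jointly} faithful enough on $B_\infty((K\times\A'')_{S'})$ to detect vanishing of a coderivation over $\Id$, which comes down to the graded-quiver identity $(K\times\A'')_{S'}(-,-)=K(-,-)\oplus\A''(-,-)$ and the fact that the two projections recover the two summands; I would spell this out as a short lemma before concluding, and the sign bookkeeping inherited from Definition \ref{compderiv} via $B_\infty$ is then automatic because all maps in sight are $B_\infty$ of explicit formal morphisms.
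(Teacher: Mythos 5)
Your proposal is correct and follows essentially the same route as the paper: the $\A''$-component of the structure is forced by strictness of $\mbox{pr}^1_{\A''}$, the $K$-component is solved for from the $\Ain$functor equation for $\Id_K\times\G$ (with consistency on the $\A''$-leg granted by $\G$ being an $\Ain$functor), and $\tilde{D}\circ\tilde{D}=0$ is detected componentwise through the two legs using that $(\Id_K\times\G)^1$ is the identity on the $K$-summand. The only presentational difference is that the paper makes the recursion explicit: both the definition of $\tilde{D}^n$ and the vanishing of $(\tilde{D}\circ\tilde{D})^n$ are inductions on $n$, using that $\mathscr{L}_{\Id_K\times\G}(-)^n$ equals $(\Id_K\times\G)^1$ applied to the $n$-th term plus expressions involving only strictly lower-order terms, which is precisely the detection step you sketch.
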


The proof of Theorem \ref{teorema2} is divided in two steps: first we prove the existence of a prenatural $\mbox{Id}_{(K\times\A'')_{S'}}$-endotransformation 
\begin{align*}
\tilde{D}^n: (K\times\A'')_{S'}\big((x_{n-1},y_{n-1}),(x_n,y_n)\big)\otimes (K\times\A'')_{S'}\big((x_1,y_1),(x_2,y_2) &\big)\to\\  
\to(K\times\A'')_{S'}\big( (x_1,y_1),(x_n,y_n)\big),
\end{align*}
such that 
\begin{align}\label{condizione1}
\mathscr{L}_{\tiny\mbox{Id}_{K}\times\G}(\tilde{D})^{n}\big((k_n,a''_n),...,(k_1,a''_1)\big)=\mathscr{R}_{\tiny\mbox{Id}_{K}\times\G}(D_{\A})^n\big((k_n,a''_n),...,(k_1,a''_1)\big)
\end{align}
and
\begin{align}\label{condizione2}
\mathscr{L}_{\tiny\mbox{pr}^1_{\A''}}(\tilde{D})^{n}\big((k_n,a''_n),...,(k_1,a''_1)\big)=\mathscr{R}_{\tiny\mbox{pr}^1_{\A''}}(D_{\A''})^n\big((k_n,a''_n),...,(k_1,a''_1)\big)
\end{align}
For every $n\ge 0$.\ Then we will prove that $\tilde{D}$ is an $\Ain$structure, i.e. $\tilde{D}\circ\tilde{D}=0$.\\
\\
We start with the following calculation.
\begin{exmp}\label{cefe}
We denote $\tilde{D}^1(k_1,a''_1):=(\tilde{k},\tilde{a}'')$.\\
Let us find $\tilde{k}\in K(x_1,x_2)$ and $\tilde{a}''\in \A''(y_1,y_2)$ such that:
\begin{align}\label{refgr}
\begin{cases}
\big(\mathscr{L}_{\tiny\Id_K\times\G}(\tilde{D}) - \mathscr{R}_{\tiny\Id_K\times\G}(m_{(K\times\A')_{S}})\big)^1(k_1,a_1'')=0\\
\big(\mathscr{L}_{\tiny\mbox{pr}^1_{\A''}}(\tilde{D}) - \mathscr{R}_{\tiny\mbox{pr}^1_{\A''}}(m_{\A''})\big)^1(k_1,a_1'')=0.
\end{cases}
\end{align}
By the second equation of (\ref{refgr}) we must have:
\begin{align*}
0&=\big(\mathscr{L}_{\tiny\mbox{pr}^1_{\A''}}(\tilde{D}) - \mathscr{R}_{\tiny\mbox{pr}^1_{\A''}}(m_{\A''})\big)^1(k_1,a_1'')\\
&=\mbox{pr}^1_{\A''}\big(\tilde{D}_1(k_1,a''_1)\big)- m_{\A''}^1\big( \mbox{pr}^1_{\A''}(k_1,a''_1)\big)\\
&=\mbox{pr}^1_{\A''}(\tilde{k}_1,\tilde{a}''\big)- m_{\A''}^1(a''_1).
\end{align*}
It means $\tilde{a}''=m_{\A''}^1(a''_1)$.\ On the other hand, by the first equation of ($\ref{refgr}$):
\begin{align*}
(0,0)&=\big(\mathscr{L}_{\tiny\tiny\Id_K\times\G}(\tilde{D}) - \mathscr{R}_{\tiny\tiny\Id_K\times\G}(m_{{(K\times\A')_{S}}})\big)^1(k_1,a_1'')\\
&=(\Id_K\times\G)^1\big(\tilde{D}^1(k_1,a''_1)\big)- m_{{(K\times\A')_{S}}}^1\big( k_1,\G^1(a''_1))\big)\\
&=(\tilde{k},\G^1(\tilde{a}'')\big)- \big(r^1\big(m_{\tiny{(K\times\A')_{S}}}^1(k_1,\G^1(a''_1))\big) , m_{\A'}^1 (\G^1(a''_1)) \big)\\
&=(\tilde{k},\G^1\big( m_{\A''}^1(a''_1)\big)\big)- \big(r^1\big(m_{\tiny{(K\times\A')_{S}}}^1(k_1,\G^1(a''_1))\big) , m_{\A'}^1 (\G^1(a''_1)) \big)\\
&=(\tilde{k} - r^1\big(m_{\tiny{(K\times\A')_{S}}}^1(k_1,\G^1(a''_1))\big),\G^1( m_{\A''}^1(a''_1))-m_{\A'}^1 (\G^1(a''_1))\big).
\end{align*}
Since $\G$ is an $\Ain$functor $\G^1( m_{\A''}^1(a''_1))-m_{\A'}^1 (\G^1(a''_1))=0$, so $\tilde{k}:=r^1\big(m_{\tiny{(K\times\A')_{S}}}^1(k_1,\G^1(a''_1))\big)$.\\
Now let us prove that $(\tilde{D}\circ\tilde{D})^1\big( (k_1,a''_1)\big)=0$.\ 
We calculate:
\begin{align*}
(\tilde{D}\circ\tilde{D})^1\big( (k_1,a''_1)\big)&=\tilde{D}^1\big(\tilde{D}^1\big((k_1,a''_1)\big)\big)\\
&=\tilde{D}^1\big( r^1\big(m_{\tiny{(K\times\A')_{S}}}^1(k_1,\G^1(a''_1))\big), m_{\A''}^1(a'') \big)\\
&=\big(r^1\big(m_{(K\times\A')_{S}}^1\big(r^1\big(m_{\tiny{(K\times\A')_{S}}}^1(k_1,\G^1(a''_1))\big),\G^1(m_{\A''}^1(a''_1)))\big), 0 \big)\\
&=\big(r^1 m_{(K\times\A')_{S}}^1(m_{(K\times\A')_{S}}^1(k_1,\G^1(a''_1))) ,0\big)\\
&=(0,0).
\end{align*}
\end{exmp}

\begin{proof}[Proof of Theorem \ref{teorema2}]
As we said before we divide this proof in two steps:
\begin{itemize}
\item[Step 1.] We want to define $\tilde{D}^n$ such that ($\ref{condizione1}$) and ($\ref{condizione2}$) hold.\ 
By Example \ref{cefe} we know how to define $\tilde{D}_1$.\ 
Suppose we can define $\tilde{D}^j$ for $1\ge j \ge n-1$.\ 
We need to define $\tilde{D}^n$ satisfying ($\ref{condizione1}$) and ($\ref{condizione2}$).\ 
As before, we use the notation: 
\begin{align*}
\tilde{D}^n\big((k_n,a''_n),...,(k_1,a''_1)\big):=(\tilde{k},\tilde{a}'').
\end{align*}
Fixed $\big((k_n,a''_n),...,(k_1,a''_1)\big)\in(K\times\A'')_{S'}$.\\
It easy to prove that we must have:
\begin{align*}
\tilde{a}'':= m_{\A''}^n(a''_n,...,a''_1)
\end{align*}
in order to verify ($\ref{condizione2}$).\ 
This is true for every positive integer $j\leq n$ and it means
\begin{align*}
\mbox{pr}^1_{\A''}\big( \tilde{D}_j((k_j,a''_j),...,(k_1,a''_1))\big):= m_{\A''}^j(a''_j,...,a''_1).
\end{align*}
Now we want to find $\tilde{k}$ in order to verify ($\ref{condizione1}$).\\
First we note:
\begin{align}\label{condizione3}
\mbox{pr}^1_{\A''}\big(\mathscr{L}_{\tiny\mbox{Id}_{K}\times\G}(\tilde{D})&-\mathscr{R}_{\tiny\mbox{Id}_{K}\times\G}(m_{(K\times \A')_{S}})\big)^n\big((k_n,a''_n),...,(k_1,a''_1)\big)\\
=&\big(\mathscr{L}_{\G}(m_{\A''})-\mathscr{R}_{\G}(m_{\A'})\big)^n(a''_n,...,a''_1)
=0.
\end{align}
Now we take the following equation:
\begin{align}\label{condizione12}
(\mathscr{L}_{\tiny\mbox{Id}_{K}\times\G}\tilde{D}-&\mathscr{R}_{\tiny\mbox{Id}_{K}\times\G}m_{(K\times \A')_{S}})^n\big((k_n,a''_n),...,(k_1,a''_1)\big)=\\
=&(\Id_K\times\G)^1(\tilde{D}^n\big((k_n,a''_n),...,(k_1,a''_1)\big)) + \psi^n\big((k_n,a''_n),...,(k_1,a''_1)\big)\\
=&(\Id_K\times\G)^1(\tilde{k},m_{\A''}^1(a''_n,...,a''_1)) + \psi^n\big((k_n,a''_n),...,(k_1,a''_1)\big) \\
=&\big(\tilde{k},\G^1(m_{\A''}^1(a''_n,...,a''_1))\big) + \psi^n\big((k_n,a''_n),...,(k_1,a''_1)\big).
\end{align}
Here we note that $\psi^n\big((k_n,a''_n),...,(k_1,a''_1)\big)$ is fully determined since it involves only the formal morphisms $\Id_K\times\G$, $\tilde{D}^{j\le n-1}$, $m_{(K\times \A')_{S'}}$.\\
We can define 
\begin{align*}
\tilde{k}:=\mbox{pr}^1_{K}\big(\psi^n\big((k_n,a''_n),...,(k_1,a''_1)\big)\big).
\end{align*}
By definition of $\tilde{k}$ and ($\ref{condizione3}$) we can define $\tilde{D}_{n}$ such that ($\ref{condizione1}$) and ($\ref{condizione2}$) hold.
\item[Step 2.] It remains to prove that $\tilde{D}$ defines an $\Ain$structure.\ In other words we need to show that 
\begin{align*}
(\tilde{D}\circ\tilde{D})^{n}\big((k_n,a''_n),...,(k_1,a''_1)\big)=0.
\end{align*}
We proved in Example \ref{cefe} that $(\tilde{D}\circ\tilde{D})^{1}=0$.\ 
Now we suppose that $(\tilde{D}\circ\tilde{D})^{j\le n-1}=0$.\\
As in the previous step, we use the following notation:
\begin{align*}
(\tilde{D}\circ\tilde{D})^{n}\big((k_{n},a''_{n}),...,(k_1,a''_1)\big)=(\tilde{k},\tilde{a}'').
\end{align*}
\end{itemize} 
We use the fact that $\mathscr{L}_{\F}(\tilde{D}\circ\tilde{D})=\mathscr{R}_{\F}({D'}\circ{D'})$, ($\ref{condizione1}$) and ($\ref{condizione2}$).\ We have:
\begin{align*}
0&=\mathscr{R}_{\tiny\mbox{pr}^1_{\A''}}({m_{\A''}}\circ{m_{\A''}})^{n}\big( (k_{n},a''_{n}),...,(k_1,a''_1)\big)\\
&=\mathscr{L}_{\tiny\mbox{pr}^1_{\A''}}(\tilde{D}\circ\tilde{D})^{n}\big( (k_{n},a''_{n}),...,(k_1,a''_1)\big)\\
&=(\mbox{pr}^1_{\A''})^1(\tilde{k},\tilde{a}'')\\
&=\tilde{a}''
\end{align*}
On the other hand we have:
\begin{align*}
(0,0)&=\mathscr{R}_{\tiny \Id_K\times\G}({m^{\A}}\circ{m^{\A}})^{n}\big( (k_{n},a''_{n}),...,(k_1,a''_1)\big)\\
&=\mathscr{L}_{\tiny\Id_K\times\G}(\tilde{D}\circ\tilde{D})^{n}\big( (k_{n},a''_{n}),...,(k_1,a''_1)\big)\\
&=(\Id_K\times\G)^1(\tilde{k},0) + \displaystyle\sum_{r\le n-1} (\Id_K\times\G)^j\big(...,(\tilde{D}\circ\tilde{D})^r,...\big)\\
&=(\tilde{k},0)
\end{align*}
the last equality follows by induction that $(\tilde D\circ \tilde D)^{r}=0$ because $r\leq n-1$ and we are done.
\end{proof}

\section{Proof of Theorems \ref{teoremone}, Corollary \ref{corollario} and Theorem \ref{teoremino}}
We claim that $(K\times\A'')_{S'}$ with the $\Ain$structure defined in Theorem \ref{teorema2} is the pullback of diagram (\ref{princip}).\ 
We need to show that, given an $\Ain$category $\C$ and two $\Ain$functors $\mathscr{L}:\C\to\A''$ and $\mathscr{I}:\C\to (K\times\A'')_{S'}$ such that, $\G\cdot\mathscr{L}=\mbox{pr}^1_{\A'}\cdot\mathscr{I}$, there exists a unique $\Ain$functor $\mathscr{N}$ such that the following diagram:
\[
\xymatrix{
\mathscr{C}\ar@{-->}[dr]^{\exists!\mathscr{N}}\ar@/_/[ddr]_{\mathscr{I}}\ar@/^/[drr]^{\mathscr{L}}&&\\
&(K\times\A'')_{S'}\ar[d]^{\tiny\mbox{Id}_{K}\times \G}\ar[r]_-{\tiny\mbox{pr}^1_{\A''}}&\A''\ar[d]^-{\G}\\
&(K\times\A')_{S} \ar@{->>}[r]_-{\tiny\mbox{pr}^1_{\A'}}&\A'
}
\]
commutes.\\
We suppose that $\mathscr{N}$ is the map induced forgetting the $\Ain$structure.\ 
Namely, $\mathscr{N}$ is the formal morphism defined as follows:
\begin{align}\label{pulbk}
\mathscr{N}_n(c_n,...,c_1):=\big( \mbox{pr}^1_{K}(\mathscr{I}_{n}(c_n,...,c_1)),\mathscr{L}_{n}(c_n,...,c_1)\big).
\end{align}
To prove Theorem \ref{teoremone} it remains to show that $\mathscr{N}$ is an $\Ain$functor i.e. $\mathscr{L}_{\mathscr{N}}(D_{\C})=\mathscr{R}_{\mathscr{N}}(\tilde{D})$.

\begin{lem}
The formal morphism $\mathscr{N}$ defined in (\ref{pulbk}) is an $\Ain$functor.
\end{lem}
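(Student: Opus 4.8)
The plan is to establish the defining equation $\mathscr{L}_{\mathscr{N}}(D_{\C})=\mathscr{R}_{\mathscr{N}}(\tilde{D})$ by testing it against the two legs $\mbox{pr}^1_{\A''}: (K\times\A'')_{S'}\to\A''$ and $\mbox{Id}_{K}\times\G: (K\times\A'')_{S'}\to(K\times\A')_{S}$ of the square (\ref{eq:4}). The reason to use these two morphisms, rather than the bare projections onto the summands $\A''$ and $K$, is that both are $\Ain$functors for the structure $\tilde{D}$ --- $\mbox{pr}^1_{\A''}$ by construction and $\mbox{Id}_{K}\times\G$ by (\ref{condizione1}) --- so that the composition identities of the \emph{Properties} above become available; the quiver $K$ itself carries no $\Ain$structure, which is exactly why one cannot simply split off the $K$-summand. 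As a preliminary I would record the two formal equalities $\mbox{pr}^1_{\A''}\cdot\mathscr{N}=\mathscr{L}$ and $(\mbox{Id}_{K}\times\G)\cdot\mathscr{N}=\mathscr{I}$: the first is read off from (\ref{pulbk}), and the second follows by unwinding the composition $(\mbox{Id}_{K}\times\G)\cdot\mathscr{N}$ and matching its $K$- and $\A'$-components, using the hypothesis $\G\cdot\mathscr{L}=\mbox{pr}^1_{\A'}\cdot\mathscr{I}$.

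Next I would run the two parallel computations. Composing the desired equation with $\mbox{pr}^1_{\A''}$: by Property (4) and $\mbox{pr}^1_{\A''}\cdot\mathscr{N}=\mathscr{L}$ one has $\mathscr{L}_{\tiny\mbox{pr}^1_{\A''}}\big(\mathscr{L}_{\mathscr{N}}(D_{\C})\big)=\mathscr{L}_{\mathscr{L}}(D_{\C})$, and by Property (5), then (\ref{condizione2}), then Property (3) together with $\mbox{pr}^1_{\A''}\cdot\mathscr{N}=\mathscr{L}$ one has $\mathscr{L}_{\tiny\mbox{pr}^1_{\A''}}\big(\mathscr{R}_{\mathscr{N}}(\tilde{D})\big)=\mathscr{R}_{\mathscr{N}}\big(\mathscr{R}_{\tiny\mbox{pr}^1_{\A''}}(D_{\A''})\big)=\mathscr{R}_{\mathscr{L}}(D_{\A''})$; the two outcomes $\mathscr{L}_{\mathscr{L}}(D_{\C})$ and $\mathscr{R}_{\mathscr{L}}(D_{\A''})$ are the two sides of the $\Ain$functor equation for $\mathscr{L}$, hence equal. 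Composing with $\mbox{Id}_{K}\times\G$ instead --- using (\ref{condizione1}) in place of (\ref{condizione2}), and $\mathscr{I}$, $m_{(K\times\A')_{S}}$ in place of $\mathscr{L}$, $D_{\A''}$ --- the same chain of Properties gives $\mathscr{L}_{\tiny\mbox{Id}_{K}\times\G}\big(\mathscr{L}_{\mathscr{N}}(D_{\C})\big)=\mathscr{L}_{\mathscr{I}}(D_{\C})$ and $\mathscr{L}_{\tiny\mbox{Id}_{K}\times\G}\big(\mathscr{R}_{\mathscr{N}}(\tilde{D})\big)=\mathscr{R}_{\mathscr{I}}(m_{(K\times\A')_{S}})$, which are equal since $\mathscr{I}$ is an $\Ain$functor.

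Finally I would upgrade these two facts to the full equality. As $\mbox{pr}^1_{\A''}$ is strict, composing a prenatural transformation with it simply extracts, arity by arity, its $\A''$-component, so the first computation says that $\mathscr{L}_{\mathscr{N}}(D_{\C})$ and $\mathscr{R}_{\mathscr{N}}(\tilde{D})$ have equal $\A''$-components. Because $(\mbox{Id}_{K}\times\G)^1(\kappa,\alpha)=(\kappa,\G^1\alpha)$ while $(\mbox{Id}_{K}\times\G)^{r\ge2}$ annihilates the $K$-summand, composing with $\mbox{Id}_{K}\times\G$ returns, in arity $n$, the $K$-component of the arity-$n$ term plus a correction built from strictly lower arities; an induction on $n$ --- using that the $\A''$-components already agree, and that a morphism of $(K\times\A'')_{S'}$ is determined by its pair of components in $K\oplus\A''$ --- then forces the $K$-components to agree as well. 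Hence $\mathscr{L}_{\mathscr{N}}(D_{\C})=\mathscr{R}_{\mathscr{N}}(\tilde{D})$, i.e.\ $\mathscr{N}$ is an $\Ain$functor.

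I expect the main obstacle to be exactly this last bookkeeping step: since $\mbox{Id}_{K}\times\G$ is not strict, composing with it mixes arities, so one cannot read off the $K$-component in a single stroke but must feed the two leg-computations into the arity induction. One should also verify that the signs do not interfere, but as Properties (3)--(5) are stated as sign-correct identities and passing to a direct summand introduces no sign, this should cause no difficulty.
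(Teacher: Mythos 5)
Your proposal is correct and follows essentially the same route as the paper's proof: the paper also tests the defect $\mathscr{L}_{\mathscr{N}}(D_{\C})-\mathscr{R}_{\mathscr{N}}(\tilde{D})$ against the two legs $\mbox{pr}^1_{\A''}$ and $\mbox{Id}_{K}\times \G$ of the square, reduces via the composition Properties and (\ref{condizione1})--(\ref{condizione2}) to the $\Ain$functor equations for $\mathscr{L}$ and $\mathscr{I}$, reads off the $\A''$-component directly from the strict projection, and recovers the $K$-component by the same induction on arity that you describe.
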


\begin{proof}
It is easy to see that:
\begin{align*}
\mathscr{L}_{\mathscr{N}}(D_{\mathscr{C}})^1(c_1)=\mathscr{R}_{\mathscr{N}}(\tilde{D})^1(c_1)
\end{align*}
For every morphism $c_1\in\C$.\\
We use the following notation:
\begin{align*}
\phi^n(c_n,...,c_1):=\big(\mathscr{L}_{\mathscr{N}}(D_{\C})-\mathscr{R}_{\mathscr{N}}(\tilde{D}) \big)^n(c_n,...,c_1)=(\tilde{k},\tilde{a}'').
\end{align*}
We have $\phi\in\mbox{Fun}\big(\C,(K\times\A'')_{S'}\big)(\mathscr{N},\mathscr{N})$, so we are in the following situation:
\begin{align*}
\xymatrix@R=1em{
&\ar@{=>}[dd]^{\phi}& &&\\
\C\ar@/^2pc/[rr]^-{\mathscr{N}}\ar@/_2pc/[rr]_-{\mathscr{N}}&&(K\times\A'')_{S'}\ar[rr]_{\tiny\Id\times\G}&&(K\times\A')_{S}\\
&&&&
}
\end{align*}
We have $\mathscr{L}_{\tiny\Id\times\F}(\phi)\in \mbox{Fun}\big(\C,(K\times\A')_{S}\big)\big((\Id_K\times\G)\cdot\mathscr{N},(\Id_K\times\G)\cdot\mathscr{N}\big)=\mbox{Fun}\big(\C,(K\times\A')_{S}\big)\big(\mathscr{I},\mathscr{I}\big)$.\\
It is not hard to prove the following 
\begin{align*}
\mathscr{L}_{\tiny\Id\times\G}(\phi):=&\mathscr{L}_{\tiny\Id\times\G}\big(\mathscr{L}_{\mathscr{N}}(D_{\mathscr{C}})-\mathscr{R}_{\mathscr{N}}(\tilde{D})\big)=\mathscr{L}_{\tiny\Id\times\G}\big(\mathscr{L}_{\mathscr{N}}(D_{\mathscr{C}})\big)-\mathscr{L}_{\tiny\Id\times\G}\big(\mathscr{R}_{\mathscr{N}}(\tilde{D})\big)\\
=&\mathscr{L}_{(\tiny\Id\times\G)\cdot\mathscr{N}}(D_{\mathscr{C}})-\mathscr{R}_{\tiny(\Id\times\G)\cdot\mathscr{N}}({D}_{\A})-\big(\mathscr{R}_{\mathscr{N}}(\mathscr{L}_{\tiny\Id\times\G}(\tilde{D})-\mathscr{R}_{\tiny\Id\times1\G}(D_{\A})) \big)\\
=&\mathscr{L}_{\mathscr{I}}(D_{\mathscr{C}})-\mathscr{R}_{\mathscr{I}}({D}_{\A})\\
=&(0,0).
\end{align*}
Now we are ready to prove that $(\tilde{k},\tilde{a}'')=(0,0)$.\
Supposing that $\phi^{j\le n-1}=0$, we have:
\begin{align*}
0=\mbox{pr}^1_{K}\big(\big(\mathscr{L}_{\tiny\Id\times\G}(\phi)\big)^n(c_n,...,c_1)\big)=\mbox{pr}^1_{K}(\phi^n(c_n,...,c_1))=\tilde{k}.
\end{align*}
On the other hand we recall that 
$$\mbox{pr}^1_{\A''}(\tilde{D}^n((k_n,a''_n),...,(k_1,a''_1))=m_{\A''}^n(a''_n,...,a''_1)$$
and 
$$\mbox{pr}^1_{\A''}(\mathscr{N}^n(c_n,...,c_1))=\mathscr{L}^n(c_n,...,c_1).$$
It means:
\begin{align*}
\tilde{a}'':=\mbox{pr}^1_{\A''}\big(\phi_n(c_n,...,c_1)\big)=&\mbox{pr}^1_{\A''}\big(\mathscr{L}_{\mathscr{N}}(D_{\mathscr{C}})^n(c_n,...,c_1)-\mathscr{R}_{\mathscr{N}}(\tilde{D})^n(c_n,...,c_1)\big)\\
&=\mathscr{L}_{\mathscr{L}}(D_{\mathscr{C}})^n(c_n,...,c_1)-\mathscr{R}_{\mathscr{L}}({D}_{\A''})^n(c_n,...,c_1)=0.
\end{align*}
So $\phi^n=0$ and we prove that $\mathscr{N}$ is an $\Ain$functor. 
\end{proof}

\begin{proof}[Proof of Corollary \ref{corollario}]
We have that $\A$, $\A'$, $\A''$, $\F$ and $\G$ are strictly unital, 
so $(K\times\A')_{S}$ is strictly unital since it is $\Ain$equivalent to $\A$ which is strictly unital.\ 
In particular, the unit of $(x,\F^0(x))\in\mbox{Ob}\big((K\times\A')_{S}\big)$ is given by $(1_{x},1_{\F^0(x)})$, where $1_x$ is the unit of $x\in\A$ and $1_{\F^0(x)}$ is the unit of $\F^0(x)\in\A'$.\
It is not difficult to prove that $(1_x,1_y)$ is the unit of the object 
$(x,y)\in\mbox{Ob}\big( (K\times\A'')_{S'} \big)$ by setting one of the morphisms $(k_i, a_i’')$ to be $(1_x, 1_y)$ in equation (\ref{condizione12}).\
In the same vein, $\mbox{Id}_{K}\times\G$, defined in (\ref{morphi}), and $\mbox{pr}^1_{\A''}$ are strictly unital $\Ain$functors.
\end{proof}

\begin{proof}[Proof of Theorem \ref{teoremino}]
It is clear that  $\mbox{pr}^1_{\A''}$ is a fibration (and full), it remains to prove that if $\F$ is an acyclic fibration, then $\mbox{pr}^1_{\A''}$ is an acyclic fibration.\
First we note that, if $\F$ is an acyclic fibration then even $\mbox{pr}^1_{\A'}:(K\times\A')_{S}\twoheadrightarrow \A'$ is so.
\begin{itemize}
\item[IsoFib)] For every $(k,a'')\in\mbox{Ob}\big( (K\times\A'')_{S'}\big)$, if there exists an isomorphism $\phi: a''\to \tilde{a}''$ in $\A''$, then there exists $\tilde{\phi}:(k,a'')\to (\tilde{k},\tilde{a}'')$, an isomorphism in $(K\times\A'')_{S'}$ such that $\mbox{pr}^1_{\A''}(\tilde{\phi})=\phi$.\\ 
We have that $\tilde{\phi}=(\tilde{\psi},\phi)$, we have to find $\psi:k\to\tilde{k}$.\ 
First we have $\G^1(\phi): \G^0(a'')\to\G^0(\tilde{a}''),$ isomorphism in $H(\A')$.\ 
Taking $(k,\G^0(a''))\in\mbox{Ob}\big((K\times\A')_{S}\big)$ then 
\begin{align*}
\xymatrix{
[\mbox{pr}^1_{\A'}]^0\big( (k,\G^0(a'')) \big) \ar[rr]^-{\sim}_-{G^1(\phi)} && \G^0(\tilde{a}'').
}
\end{align*}
Since $[\mbox{pr}^1_{\A'}]$ is isofibration then it exists $\Psi:=(\psi,\hat{\psi}):(k,\G^0(a''))\to (\tilde{k},\G^0(\tilde{a}''))$ such that $\hat{\psi}=\G^1(\phi)$.\
We take $\tilde{\phi}:=(\psi,{\phi})$ and we are done.
\item[FF)] 
If $\F$ is an acyclic fibration, then for every $x,y\in\A$ the complex $\mbox{Ker}(\F^1)(x,y)$ is acyclic.\ 
It follows that $[\mbox{pr}^1_{\A''}]$ is full and faithfull.
\item[ExSurj)] We take $z\in\mbox{Ob}(\A'')$, we have $\G^0(z)\in\mbox{Ob}(\A')$ then it exists $x\in \mbox{Ob}(\A)=\mbox{Ob}\big( (K\times\A)_{S} \big)$ such that $\F^0(x)\simeq \G^0(z)$ is isomorphic in $H(\A')$.\
Taking $z$, we have $(x,z)$ such that $\F^0(x)\simeq\G^0(z)$.\ 
Since $\F$ is an isofibration if $\alpha:[\F^0](x)\simeq w=\G^0(z)\in H(\A')$, then it exists $\phi$ such that $\F^1(\phi)=\alpha$.\ 
So $\G^0(z)=\F^0(\tilde{x})$ for some $\tilde{x}\in\mbox{Ob}(\A)$. 
\end{itemize}
\end{proof}

\end{document}